\newtheorem{thm}{Theorem}[section]
\newtheorem{def1}[thm]{Definition}
\newtheorem{lemma}[thm]{Lemma}
\newtheorem{corollary}[thm]{Corollary}
\newtheorem{rem}[thm]{Remark}
\newtheorem{prop}[thm]{Proposition}
\newtheorem{example}[thm]{Example}
\begin{document}
\title{\Large \bf A Complex Version of G-Expectation and Its Application to Conformal Martingale}

\author{Huilin Zhang \thanks{%
School of Mathematics, Shandong University,
huilinzhang2014@gmail.com} } \maketitle
\date{}

\begin{abstract}
This paper is concerned with the connection between G-Brownian
Motion and analytic functions. We introduce the complex version of
sublinear expectation, and then do the stochastic analysis in this
framework. Furthermore, the conformal G-Brownian Motion is
introduced together with a representation, and the corresponding
conformal invariance is shown.

\end{abstract}

\textbf{Key words}: $G$-expectation, complex sublinear expectation,
$G$-Brownian motion, conformal martingale, conformal invariance

\textbf{Mathematics Subject Classification (2010).}
60G46;30A99;60H05;60G48

\date{}
\maketitle

\newcommand{\EC}{\hat{\mathbb{E}}_\mathbb{C}}
\newcommand{\HC}{\mathcal {H}_\mathbb{C}}
\newcommand{\OHEC}{(\Omega,\,\HC,\,\EC)}
\newcommand{\E}{\hat{\mathbb{E}}}
\newcommand{\R}{\mathbb{R}}
\newcommand{\OHE}{(\Omega,\,\mathcal{H},\,\E)}
\newcommand{\C}{\mathbb{C}}

\section{Introduction}
G-expectation was first introduced by peng to handle risk measures,
stochastic volatility and model
uncertainty(see\cite{P07a},\cite{P08a}). The idea of G-expectation
lies in the new explanation of independence and distribution. The
main difference between G-expectation and linear expectation is, of
course, sublinearity, which is mathematically explained by a
powerful tool in PDE, viscosity solution theory. A typical
G-expectation is constructed on linear space of random variables
$L_G^1(\Omega),$ the completion of $Lip(\Omega)$ of Lipschitz
cylinder functions on d-dimensional continuous path space
$\Omega=C_0^d[0,\infty ).$ The corresponding G-Brownian Motion on
this space is the canonical process, i.e. $B_t(\omega)=\omega(t),
\omega \in \Omega.$ A sublinear expectation $\E$ on $(\Omega,
Lip(\Omega))$ is defined as a viscosity solution of a heat equation,
which is determined by a sublinear monotone function G. Then the
random variable space $Lip(\Omega)$ is extended to completion by the
norm $\E[|\cdot|],$ and we denote the completion as $L_G^1(\Omega).$
By such construction, a time consistent conditional expectation is
introduced naturally on $L_G^1(\Omega).$\\

G-expectation theory is thriving in recent
years.(see\cite{DHP11},\cite{P10},\cite{Song11}) In classical case,
we know there are many interesting connection between Brownian
Motion and analytic functions. A fundamental one is Paul
$L\acute{e}vy's$ theorem, that is, if $f(z)$ is analytic nonconstant
function, and $B_t$ is a two dimensional Brownian Motion, the
process $f(B_t),t \geq 0$ is again Brownian Motion, probably moving
at a variable speed.(see \cite{BD79}) $L\acute{e}vy's$ result leads
to the possibility to study analytic functions probabilistically. On
the other hand, a beauty of linear expectation is that a
distribution can be uniquely described by its characteristic
function. However, there
is little work on G-expectation under complex framework.\\

In this paper, we give the stochastic analysis of G-expectation in
complex case, and furthermore, the conformal G-Brownian Motion and
conformal martingales. We find that a special G-Brownian Motion,
namely the conformal G-Brownian Motion still holds conformal
invariance, i.e. it is still conformal after a transformation by a
nonconstant analytic function. We should notice that conformal
G-Brownian Motion is much more complicated than the classical
case.\\

This paper is organized as follows. In section 2, we recall some
basic results of G-Brownian. In section 3 and 4, we define the
sublinear expectation under complex framework and the complex
G-normal distribution and G-Brownian Motion. In the fifth part, we
do the stochastic analysis in this framework and give It$\hat{o}'s$
formula. In the last part, we introduce the conformal martingale and
prove conformal invariance.

\section{Preliminaries}

We start from some basic notes and results of G-expectation theory.
More details can be read from \cite{P07a}, \cite{P08a}, \cite{P10}.

\subsection{Sublinear Expectation and G-Expectation}

Let $\Omega$ be a given set and $\mathcal {H}$ be a linear space of
real valued functions on $\Omega$ containing constants. Furthermore,
suppose $|X| \in \mathcal{H}$ if $X \in \mathcal{H}.$ The space
$\Omega$ is viewed as sample space and $\mathcal {H}$ is the space
of random variables.

\begin{def1}
A sublinear expectation $\E$ is a functional $\E: \mathcal{H}
\rightarrow \R$ satisfying

$(i)$ constant preserving:
$$
\E(c)=c,\qquad \forall\;c\in\,\mathbb{R}$$

$(ii)$ positive homogeneity:
$$\E(\lambda X)=\lambda\E(X),\qquad \lambda\geq0 \quad X\in
\mathcal{H}$$

$(iii)$ constant transferability:
$$\E(X+c)=\E(X)+c, \qquad c\in \mathbb{R} \quad X\in
\mathcal{H}$$

$(iv)$ monotonicity:
$$\E(X_1) \geq \E(X_2) \qquad if\; X_1\geq X_2$$

The triple $(\Omega, \mathcal{H}, \E)$ is called a sublinear
expectation space.
\end{def1}

\begin{def1}
Let $X_{1}$ and $X_{2}$ be two $n$-dimensional random vectors
defined respectively in sublinear expectation spaces $(\Omega_{1}
,\mathcal{H}_{1},\E_{1})$ and $(\Omega_{2},\mathcal{H}
_{2},\E_{2})$. They are called identically distributed, denoted by
$X_{1}\overset{d}{=}X_{2}$, if $\E_{1}[\varphi(X_{1}
)]=\E_{2}[\varphi(X_{2})]$, for all$\  \varphi \in C_{b.Lip}
(\mathbb{R}^{n})$.
\end{def1}

\begin{def1}
In a sublinear expectation space $(\Omega,\mathcal{H} ,\E)$, a
random vector $Y=(Y_{1},\cdot \cdot \cdot,Y_{n})$, $Y_{i}\in
\mathcal{H}$, is said to be independent of another random vector
$X=(X_{1},\cdot \cdot \cdot,X_{m})$, $X_{i}\in \mathcal{H}$ under
$\E[\cdot]$, denoted by $Y\bot X$, if for every test function
$\varphi \in C_{b.Lip}(\mathbb{R}^{m}\times \mathbb{R}^{n})$ we have
$\E[\varphi(X,Y)]=\E[ \E [\varphi(x,Y)]_{x=X}]$.
\end{def1}

\begin{def1}(maximal distribution)
A d-dimensional random vector $\eta=(\eta_1,\cdots ,\eta_d)$ on a
sublinear expectation space $(\Omega,\mathcal {H},\E)$ is called
maximal distributed if $\eta$ satisfies:
\[
a\eta+b\bar{\eta}\overset{d}{=}(a+b)\eta, \; a,b \geq 0,
\]
where $\bar{\eta}$ is an independent copy of $\eta$, i.e.,
$\bar{\eta}\overset{d}{=}\eta$ and $\bar{\eta}\bot \eta.$

\end{def1}

\begin{rem}\label{md}
By the definition of maximal distribution, we can get that there
exists a bounded, closed and convex subset $\Gamma \subset \R^d$
such that for any $\varphi \in C_{b.lip}(\R^d),$ we have:
$$
\E[\varphi(\eta)]=\max _{y \in \Gamma} \varphi(y).
$$
When $d=1,$ we have $\Gamma=[\underline{\mu},\bar{\mu}],$ where
$\bar{\mu}=\E[\eta]$ and $\underline{\mu}=-\E[-\eta]$. Furthermore,
in this case we denote $\eta
\overset{d}{=}N(\{\underline{\mu},\bar{\mu}\}\times{0}).$

In the classical case, the maximal distributed random is a constant.
\end{rem}

\begin{def1}($G$-normal distribution)
A $d$-dimensional random vector $X=(X_{1},\cdot \cdot \cdot,X_{d})$
in a sublinear expectation space $(\Omega,\mathcal{H},\E)$ is called
$G$-normally distributed if $\mathbb{\hat{E}}[|X|^{3}]<\infty$ and
for each $a,b\geq0$
\[
aX+b\bar{X}\overset{d}{=}\sqrt{a^{2}+b^{2}}X,
\]
where $\bar{X}$ is an independent copy of $X$, and
\[
G(A):=\frac{1}{2}\E[\langle AX,X\rangle]:\mathbb{S}%
_{d}\rightarrow \mathbb{R},
\]
Here $\mathbb{S}_{d}$ denotes the collection of $d\times d$
symmetric matrices.
\end{def1}

By\cite{P10}we know that $X=(X_1,\cdots, X_d )$ is G-normal
distributed iff $u(t,x):=\E[\varphi(x+\sqrt{t}X)],$ $(t,x) \in
[0,\infty)\times \R^d,$ $\varphi \in C_{b.Lip}(\R^d),$ is the
viscosity solution of the following G-heat equation:
$$
\partial_t u-G(D_x^2u)=0, \ u(0,x)=\varphi(x).
$$
The function $G(\cdot):\mathbb{S} _{d} \rightarrow \mathbb{R}$ is a
monotonic, sublinear functional on $\mathbb{S} _{d},$ from which we
can deduce that there exists a bounded, convex and closed subset
$\Sigma \subset \mathbb{S}_{d}^{+}$ such that
\[
G(A)=\frac{1}{2}\sup_{B\in \Sigma}\mathrm{tr}[AB],
\]
where $\mathbb{S}_{d}^{+}$ denotes the collection of nonnegative
matrixes in $\mathbb{S}_{d}$.

Here is the typical construction of G-expectation
from\cite{P08a}.For simplicity, we only consider the one dimensional
condition.

Let $\Omega=C_{0}(\mathbb{R}^{+})$ be the space of all
$\mathbb{R}$-valued continuous paths $(\omega_{t})_{t\in
\mathbb{R}^{+}}$,
with $\omega_{0}=0$, equipped with the distance%
\[
\rho(\omega^{1},\omega^{2}):=\sum_{i=1}^{\infty}2^{-i}[(\max_{t\in
\lbrack 0,i]}|\omega_{t}^{1}-\omega_{t}^{2}|)\wedge1].
\]
and the canonical process is defined by $B_{t}(\omega)=\omega_{t}$,
$t\in [0,\infty)$, for $\omega \in \Omega$. Then we define
\[
L_{ip}(\Omega):=\{ \varphi(B_{t_{1}},...,B_{t_{n}}):n\geq1,t_{1},...,t_{n}%
\in [0,\infty),\varphi \in C_{b.Lip}(\mathbb{R}^{ n})\}.
\]

Give a monotonic and sublinear function $G:\mathbb{S}_{d}\rightarrow
\mathbb{R}.$ The corresponding G-expectation $ \hat{\E}$ is a
sublinear expectation on $L_{ip}(\Omega)$ satisfying:

\[
\mathbb{\hat{E}}[X]=\mathbb{\tilde{E}}[\varphi(\sqrt{t_{1}-t_{0}}\xi_{1}%
,\cdot \cdot \cdot,\sqrt{t_{m}-t_{m-1}}\xi_{m})],
\]
for all $X=\varphi(B_{t_{1}}-B_{t_{0}},B_{t_{2}}-B_{t_{1}},\cdot
\cdot \cdot,B_{t_{m}}-B_{t_{m-1}})$ with $0\leq
t_{0}<t_{1}<\cdots<t_{m}<\infty$, where $\xi_{1},\cdot \cdot
\cdot,\xi_{n}$ are identically distributed $d$-dimensional
$G$-normally distributed random vectors in a sublinear expectation
space $(\tilde{\Omega},\tilde{\mathcal{H}},\mathbb{\tilde{E}})$ such
that $\xi_{i+1}$ is independent of $(\xi_{1},\cdot \cdot
\cdot,\xi_{i})$ for every $i=1,\cdot \cdot \cdot,m-1$.

\begin{def1}
Let $\Omega_{t}=\{ \omega_{\cdot \wedge t}:\omega \in \Omega \}$ for
$t\geq0$. For each
$\xi=\varphi(B_{t_{1}}-B_{t_{0}},B_{t_{2}}-B_{t_{1}},\cdot \cdot
\cdot,B_{t_{m}}-B_{t_{m-1}})$, the conditional $G$-expectation of
$\xi$ under $\Omega_{t_{i}}$ is defined by
\[
\mathbb{\hat{E}}_{t_{i}}[\varphi(B_{t_{1}}-B_{t_{0}},B_{t_{2}}-B_{t_{1}}%
,\cdot \cdot \cdot,B_{t_{m}}-B_{t_{m-1}})]
\]%
\[
=\tilde{\varphi}(B_{t_{1}}-B_{t_{0}},B_{t_{2}}-B_{t_{1}},\cdot \cdot
\cdot,B_{t_{i}}-B_{t_{i-1}}),
\]
where
\[
\tilde{\varphi}(x_{1},\cdot \cdot
\cdot,x_{i})=\mathbb{\hat{E}}[\varphi
(x_{1},\cdot \cdot \cdot,x_{i},B_{t_{i+1}}-B_{t_{i}},\cdot \cdot \cdot,B_{t_{m}%
}-B_{t_{m-1}})].
\]

\end{def1}

For each fixed $T\geq0$, we define
\[
L_{ip}(\Omega_{T}):=\{ \varphi(B_{t_{1}},...,B_{t_{n}}):n\geq1,t_{1}%
,...,t_{n}\in \lbrack0,T],\varphi \in C_{b.Lip}(\mathbb{R}^{n})\}.
\]
It is simple that $L_{ip}(\Omega_{T_{1}})\subset
L_{ip}(\Omega_{T_{2}})\subset L_{ip}(\Omega)$ for $T_{1}<T_{2}$.
Furthermore, we denote $L_{G}^{p}(\Omega)$ and
$L_{G}^{p}(\Omega_{T})$, $p\geq1$ as the completion of
$L_{ip}(\Omega)$ and $L_{ip}(\Omega_{T})$ under the norm $\Vert \xi
\Vert_{p}=(\mathbb{\hat{E}} [|\xi|^{p}])^{1/p}$. Consequently,
$L_{G}^{p_{1}}(\Omega)\subset L_{G}^{p_{2}}(\Omega)$ for $p_{1}\geq
p_{2}\geq1$.

By the above construction we can get that the $G$-expectation
$\mathbb{\hat{E}}[\cdot]$ can be continuously extended to a
sublinear expectation on $(\Omega,L_{G}^{1}(\Omega))$ and it is
still denoted by $\mathbb{\hat{E}}[\cdot]$. For each given $t\geq0$,
the conditional $G$-expectation
$\mathbb{\hat{E}}_{t}[\cdot]:L_{ip}(\Omega)\rightarrow
L_{ip}(\Omega_{t})$ can be continuously extended as a mapping $\mathbb{\hat{E}}%
_{t}[\cdot]:L_{G}^{1}(\Omega)\rightarrow L_{G}^{1}(\Omega_{t})$ and
satisfies the following properties:

\begin{description}
\item[(i)] If $X$, $Y\in L_{G}^{1}(\Omega)$, $X\geq Y$, then $\mathbb{\hat{E}%
}_{t}[X]\geq \mathbb{\hat{E}}_{t}[Y]$;

\item[(ii)] If $X\in L_{G}^{1}(\Omega_{t})$, $Y\in L_{G}^{1}(\Omega)$, then
$\mathbb{\hat{E}}_{t}[X+Y]=X+\mathbb{\hat{E}}_{t}[Y]$;

\item[(iii)] If $X$, $Y\in L_{G}^{1}(\Omega)$, then $\mathbb{\hat{E}}%
_{t}[X+Y]\leq \mathbb{\hat{E}}_{t}[X]+\mathbb{\hat{E}}_{t}[Y]$;

\item[(iv)] If $X\in L_{G}^{1}(\Omega_{t})$ is bounded, $Y\in L_{G}^{1}%
(\Omega)$, then $\mathbb{\hat{E}}_{t}[XY]=X^{+}\mathbb{\hat{E}}_{t}%
[Y]+X^{-}\mathbb{\hat{E}}_{t}[-Y]$;

\item[(v)] If $X\in L_{G}^{1}(\Omega)$, then $\mathbb{\hat{E}}_{s}%
[\mathbb{\hat{E}}_{t}[X]]=\mathbb{\hat{E}}_{s\wedge t}[X]$, in
particular,
$\mathbb{\hat{E}}[\mathbb{\hat{E}}_{t}[X]]=\mathbb{\hat{E}}[X]$.
\end{description}

\subsection{Stochastic calculus under G-Expectation}

For a partition $\Pi=\{t_0,t_1,\cdots,t_N\}$ of $[0,T]$, we define
$$
\eta_t(\omega):=\Sigma_{k=0}^{N-1}\xi_k(\omega)I_{[t_k,t_k+1)}(t)
$$
where $\xi_k \in L_G^p(\Omega_{t_k})$, $k=0,1,2,\cdots,N-1$ are
given. We denote these processes by $M_G^{p,0}(0,T)$.

\begin{def1}
For any $\eta \in M_G^{p,0}(0,T)$, with the form
$\eta_t(\omega)=\Sigma_{k=0}^{N-1}\xi_k(\omega)I_{[t_k,t_k+1)}(t)$,
the related Bochner integral on $[0,T]$ is defined :
$$
\int_0^T
\eta_t(\omega)dt:=\sum_{k=0}^{N-1}\xi_k(\omega)(t_{k+1}-t_k)
$$

\end{def1}

We then complete the space $M_G^{p,0}(0,T)$ under norm\\
$\|\cdot\|_M^p:=\{{\E}[\int_0^T|\cdot|^p dt]\}^{\frac1p}$ and denote
the completion as $M_G^p(0,T)$. It is clear that $M_G^p(0,T)\subset
M_G^q(0,T),$if $1 \leq q \leq p.$ Here is the definition of
It$\hat{o}'s$ integral.

\begin{def1}
For a $\eta \in M_G^{2,0}(0,T)$ of the form
$$
\eta_t(\omega)=\Sigma_{k=0}^{N-1}\xi_k(\omega)I_{[t_k,t_k+1)}(t)
$$
We define the $it\hat{o}$ integral as the following operator
$I(\cdot) \ : \ M_G^{2,0}(0,T) \mapsto L_G^2(\Omega_T)$ as
following:
$$
I(\eta)=\int_0^{T} \eta_t dB_t:=\sum_{j=0}^{N-1}
\xi_j(B_{t_{j+1}}-B_{t_j}),
$$
where $B_t$ is a G-Brownian Motion, and
$G(\alpha)=\frac12(\bar{\sigma}^2 \alpha^+ -\underline{\sigma}^2
\alpha^-),$ $0 \leq \underline{\sigma}^2 \leq \bar{\sigma}^2 \leq
\infty$
\end{def1}

\begin{lemma}
For the mapping $I:M_G^{2,0}(0,T) \mapsto L_G^2(\Omega_T)$, we have:

\begin{eqnarray}
{\E}[\int_0^T \eta_t dB_t]   & = &0,\\
{\E}[|\int_0^T \eta_t dB_t|^2]& \leq & \bar{\sigma}^2
{\E}[\int_0^T|\eta_t|^2 dt],
\end{eqnarray}

\end{lemma}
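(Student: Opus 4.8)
The plan is to prove both identities first for simple integrands $\eta=\sum_{k=0}^{N-1}\xi_k I_{[t_k,t_{k+1})}$ whose coefficients $\xi_k\in L_G^2(\Omega_{t_k})$ are \emph{bounded}, and then to remove boundedness using the continuous extension of $\E_t[\cdot]$ to $L_G^1(\Omega)$ recorded in Section 2 together with the density of bounded elements of $L_G^2(\Omega_{t_k})$ and the fact that $I$ is a linear map between normed spaces. Throughout write $\Delta B_k:=B_{t_{k+1}}-B_{t_k}$, $Y_k:=\sum_{j=0}^{k-1}\xi_j\Delta B_j$ (so $Y_N=I(\eta)$) and $A_k:=\sum_{j=0}^{k-1}\xi_j^2(t_{j+1}-t_j)$, so that $A_N=\int_0^T|\eta_t|^2\,dt$ is the Bochner integral defined above. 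I shall repeatedly use that, for the $G$ in the statement, $\Delta B_k$ is independent of $\Omega_{t_k}$ and symmetric, whence $\E[\Delta B_k]=\E[-\Delta B_k]=0$ and $\E[(\Delta B_k)^2]=\bar{\sigma}^2(t_{k+1}-t_k)$.

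For the first identity I argue by backward recursion along the partition. By the tower rule (property (v)) it suffices to handle the last increment: by property (ii), $\E_{t_{N-1}}[Y_N]=Y_{N-1}+\E_{t_{N-1}}[\xi_{N-1}\Delta B_{N-1}]$, and property (iv) together with $\E_{t_{N-1}}[\pm\Delta B_{N-1}]=\E[\pm\Delta B_{N-1}]=0$ (independence of $\Omega_{t_{N-1}}$) gives $\E_{t_{N-1}}[\xi_{N-1}\Delta B_{N-1}]=0$. Hence $\E_{t_{N-1}}[Y_N]=Y_{N-1}$; iterating down to $Y_0=0$ and then applying $\E$ yields $\E[I(\eta)]=0$.

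For the estimate $(2)$ the key step is the one-step inequality
\[
\E_{t_{k-1}}\big[|Y_k|^2\big]\ \le\ |Y_{k-1}|^2+\bar{\sigma}^2\,\xi_{k-1}^2\,(t_k-t_{k-1}).
\]
To obtain it I expand $|Y_k|^2=|Y_{k-1}|^2+2Y_{k-1}\xi_{k-1}\Delta B_{k-1}+\xi_{k-1}^2(\Delta B_{k-1})^2$; by property (ii) the $\Omega_{t_{k-1}}$-measurable term $|Y_{k-1}|^2$ comes out, and by independence of $\Delta B_{k-1}$ from $\Omega_{t_{k-1}}$ the conditional $G$-expectation of the rest equals $\E[a\Delta B_{k-1}+b(\Delta B_{k-1})^2]$ evaluated with $a=2Y_{k-1}\xi_{k-1}\in\R$ and $b=\xi_{k-1}^2\ge0$ frozen; sub-additivity plus the two facts above bound this by $\E[a\Delta B_{k-1}]+\E[b(\Delta B_{k-1})^2]=0+\bar{\sigma}^2 b(t_k-t_{k-1})$. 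Now set $M_k:=|Y_k|^2-\bar{\sigma}^2 A_k$. Since $A_k\in L_G^1(\Omega_{t_{k-1}})$, property (ii), monotonicity (i) and the one-step inequality give $\E_{t_{k-1}}[M_k]\le M_{k-1}$, i.e.\ $(M_k)$ is a $G$-supermartingale along the partition; combining this with the tower rule (v) and monotonicity yields $\E[M_N]\le\E[M_{N-1}]\le\cdots\le\E[M_0]=\E[0]=0$. Finally sub-additivity of $\E$, used in the form $\E[X]-\E[Z]\le\E[X-Z]$ with $X=|Y_N|^2$ and $Z=\bar{\sigma}^2 A_N$, gives $\E[|I(\eta)|^2]-\bar{\sigma}^2\E[\int_0^T|\eta_t|^2\,dt]\le\E[M_N]\le0$, which is $(2)$.

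The place I expect to need the most care is exactly the two-term conditional computation: the cross term $2Y_{k-1}\xi_{k-1}\Delta B_{k-1}$ carries no sign, so it cannot be dropped by monotonicity and must be kept together with the square term when the conditional $G$-expectation is taken; this also forces the argument to estimate the whole process $M_k$ rather than the individual summands, because the sub-additivity of $\E$ points the "wrong way" for a term-by-term bound. The boundedness reductions needed to legitimately freeze $Y_{k-1},\xi_{k-1}$ and invoke the independence relation for the non-Lipschitz test function $\psi(y,\zeta,x)=|y|^2+2y\zeta x+\zeta^2x^2$ are routine truncation arguments, justified by the continuity of $\E_t[\cdot]$ on $L_G^1$.
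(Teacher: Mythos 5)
Your proof is correct. Note first that the paper itself gives no proof of this lemma --- it is recalled from Peng's work as a preliminary, and the only argument of this type actually written out in the paper is the proof of the complex analogue in Section 5 (the proposition containing (\ref{2})--(\ref{3})). Your argument is essentially the same mechanism that proof relies on: peel off the last increment, use property (ii) to pull out the $\Omega_{t_{k-1}}$-measurable part, use independence of $\Delta B_{k-1}$ from $\Omega_{t_{k-1}}$ together with $\E[\pm\Delta B_{k-1}]=0$ and $\E[(\Delta B_{k-1})^2]=\bar{\sigma}^2\Delta t_{k-1}$, and iterate backward with the tower rule. The one genuine difference is organizational: the paper's complex proof first kills the cross terms $\E[\xi_j\Delta B_j\,\xi_l\Delta B_l]$ separately and then replaces the diagonal terms $(\Delta B_j)^2$ by $\bar{\sigma}^2\Delta t_j$ one at a time, whereas you keep the cross and square terms together in the single frozen functional $a x+b x^2$ and package the backward induction as the supermartingale property of $M_k=|Y_k|^2-\bar{\sigma}^2A_k$. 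Your packaging is slightly cleaner because it handles the sign problem of the cross term in one stroke (sub-additivity applied to $ax+bx^2$ rather than term by term), at the cost of needing the final step $\E[X]-\E[Z]\le\E[X-Z]$, which you state correctly. The points you flag as delicate --- that the test function $ax+bx^2$ is not bounded Lipschitz and that $Y_{k-1},\xi_{k-1}$ must be frozen legitimately --- are real but are handled by the standard truncation/density argument, exactly as in Peng's construction, so there is no gap.
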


Thus we can continuously extend $I$ to a mapping from $M_G^2(0,T)$
to $L_G^2(\Omega_T),$ which is also denoted as $ I$.

\begin{def1}
For a $\eta \in  M_G^{2}(0,T)$, the stochastic integral is defined
as
\begin{eqnarray*}
\int_0^T\eta_t dB_t&:=& I(\eta),\\
 \int_s^t \eta_u dB_u&:=& \int_0^T I_{[s,t]}(u)
\eta_u dB_u,
\end{eqnarray*}

\end{def1}

Here is some basic properties of the integral of G-Brownian motion,
and the proof is omitted.

\begin{prop}
For $\eta, \zeta \in M_G^{2}(0,T)$, and $0\leq s\leq r \leq
t\leq T$, we have:\\
$(i)$ $\int_s^t \eta_u dB_u=\int_s^r \eta_u dB_u+\int_r^t \eta_u
dB_u$.\\
$(ii)$ $\int_s^t(\alpha \eta_u+\zeta_u)dB_u=\alpha \int_s^t \eta_u
dB_u+\int_s^t \zeta_u dB_u$, where $\alpha$ is bounded in
$L_G^1(\Omega_s)$.\\
$(iii)$ $\E[X+\int_r^T \eta_u dB_u|\Omega_s]=\E[X|\Omega_s]$, for $X
\in L_G^1(\Omega_T)$

\end{prop}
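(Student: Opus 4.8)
The plan is to establish all three identities first for elementary integrands in $M_G^{2,0}(0,T)$, where each reduces to an algebraic identity between finite sums, and then to pass to $M_G^2(0,T)$ by density, using the continuity estimate $\E[|\int_0^T\eta_u\,dB_u|^2]\le\bar{\sigma}^2\,\E[\int_0^T|\eta_u|^2\,du]$ of the preceding Lemma. For (i), I would first check that $I:M_G^{2,0}(0,T)\to L_G^2(\Omega_T)$ is additive — refine $\eta$ and $\zeta$ to a common partition and read $I(\eta+\zeta)=I(\eta)+I(\zeta)$ off the defining sum — and note that this additivity extends to all of $M_G^2(0,T)$ by the same estimate. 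Since $I_{[s,t]}(u)\eta_u=I_{[s,r]}(u)\eta_u+I_{[r,t]}(u)\eta_u$ in $M_G^2(0,T)$, applying $I$ and recalling $\int_s^t\eta_u\,dB_u:=I(I_{[s,t]}\eta)$ gives (i).

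For (ii), take $\eta=\sum_k\xi_k I_{[t_k,t_{k+1})}$ elementary with $s$ a partition point and $\alpha$ bounded in $L_G^1(\Omega_s)$; then $\alpha\xi_k\in L_G^2(\Omega_{t_k})$ for $t_k\ge s$ (boundedness of $\alpha$ reduces this to $\xi_k\in L_{ip}(\Omega_{t_k})$ by truncation), so $\alpha\eta\in M_G^{2,0}(0,T)$ and $I(I_{[s,t]}(\alpha\eta))=\sum_j\alpha\xi_j(B_{t_{j+1}}-B_{t_j})=\alpha\,I(I_{[s,t]}\eta)$. For general $\eta$, I would approximate by elementary $\eta^n\to\eta$ in $M_G^2(0,T)$: then $\alpha\eta^n\to\alpha\eta$ in $M_G^2(0,T)$ and $\alpha I(\eta^n)\to\alpha I(\eta)$ in $L_G^2(\Omega_T)$, since multiplication by a bounded random variable is a bounded operator on both spaces. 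Combined with the additivity from (i) applied to $\zeta$, this yields (ii).

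For (iii), note that by definition $\E[\,\cdot\,|\Omega_s]=\E_s[\,\cdot\,]$, and by sublinearity $\E_s[X+Y]\le\E_s[X]+\E_s[Y]$ while $\E_s[X]\le\E_s[X+Y]+\E_s[-Y]$; hence it is enough to prove $\E_s[\int_r^T\eta_u\,dB_u]=0$ for every $\eta\in M_G^2(0,T)$ (applying this to $\eta$ and to $-\eta$ with $Y=\int_r^T\eta_u\,dB_u$ pins down equality). I would reduce to an elementary $\eta=\sum_{k=0}^{N-1}\xi_k I_{[t_k,t_{k+1})}$ with $r=t_0<\cdots<t_N=T$ and $\xi_k\in L_G^2(\Omega_{t_k})$ bounded, and set $\Delta_k:=B_{t_{k+1}}-B_{t_k}$. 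Peeling off the last increment, the partial sum $\sum_{k=0}^{N-2}\xi_k\Delta_k$ lies in $L_G^1(\Omega_{t_{N-1}})$, so by property (ii) of the conditional expectation $\E_{t_{N-1}}[\sum_{k=0}^{N-1}\xi_k\Delta_k]=\sum_{k=0}^{N-2}\xi_k\Delta_k+\E_{t_{N-1}}[\xi_{N-1}\Delta_{N-1}]$, and by property (iv) together with $\E_{t_{N-1}}[\pm\Delta_{N-1}]=\E[\pm\Delta_{N-1}]=0$ (independence of increments and the zero mean of the $G$-normal law) the last term vanishes. Iterating downward and invoking property (v) in the form $\E_s[\E_{t_j}[\,\cdot\,]]=\E_{s\wedge t_j}[\,\cdot\,]=\E_s[\,\cdot\,]$ (valid since $s\le r\le t_j$) collapses everything to $\E_s[\xi_0\Delta_0]=0$. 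Finally, $\E_s$ is contractive from $L_G^1(\Omega)$ to $L_G^1(\Omega_s)$ because $\E[|\E_s Y_1-\E_s Y_2|]\le\E[|Y_1-Y_2|]$, and $I(I_{[r,T]}\eta^n)\to I(I_{[r,T]}\eta)$ in $L_G^2(\Omega_T)\subset L_G^1(\Omega)$ by the estimate of the preceding Lemma, so the identity $\E_s[\int_r^T\eta_u\,dB_u]=0$ persists for all $\eta\in M_G^2(0,T)$; this gives (iii).

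The genuinely non-routine point is the conditional zero-mean identity in (iii): one must peel off increments one at a time, using properties (ii) and (iv) of the conditional $G$-expectation rather than trying to distribute $\E_{t_j}$ over the whole sum (which sublinearity forbids), and arrange the truncation so that property (iv) legitimately applies to possibly unbounded $\xi_k$. The $M_G^2$-closure passages in (i) and (ii) are standard; the only subtlety there is that multiplication by a bounded $\Omega_s$-measurable random variable maps $M_G^2(0,T)$ into itself, which again needs a short approximation argument.
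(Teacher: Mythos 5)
Your proof is correct, and it is the standard argument from Peng's framework (\cite{P07a}, \cite{P10}); the paper itself explicitly omits the proof of this proposition, so there is nothing to compare against beyond noting that your route---verification on simple processes, density via the $L^2$-isometry-type estimate, and for (iii) the peel-off of increments using properties (ii) and (iv) of $\mathbb{\hat{E}}_t$ together with the sublinearity sandwich $\mathbb{\hat{E}}_s[X+Y]\leq\mathbb{\hat{E}}_s[X]+\mathbb{\hat{E}}_s[Y]$ and $\mathbb{\hat{E}}_s[X]\leq\mathbb{\hat{E}}_s[X+Y]+\mathbb{\hat{E}}_s[-Y]$ with $\mathbb{\hat{E}}_s[\pm Y]=0$---is exactly the one the cited references use. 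You also correctly identify the two genuine technical points (truncation of the $\xi_k$ so property (iv) applies, and stability of $M_G^2$ under multiplication by bounded $\Omega_s$-measurable variables), so no gaps.
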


Now we consider the quadratic variation process of G-Brownian motion
$(B_t)_{t \geq 0}$ with $B_1\overset{d}{=}N(\{0\} \times
[\underline{\sigma}^2,\bar{\sigma}^2]).$

For a G-B.M. $B_t$ and a partition $\Pi_N$ of $[0,t]$: $0=t_0\leq
t_1\leq,\cdots, \leq t_N=t$, notice
$$
B_t^2=\sum_{j=0}^{N-1}
2B_{t_j^{N}}(B_{t_{j+1}^{N}}-B_{t_j^{N}})+\sum_{j=0}^{N-1}(B_{t_{j+1}^{N}}-B_{t_j^{N}})^2
.$$ As $||\Pi_N||\rightarrow 0$, we can show that
$$
\sum_{j=0}^{N-1}2B_{t_j^{N}}(B_{t_{j+1}^{N}}-B_{t_j^{N}})
\stackrel{L_G^2(\Omega)}{\rightarrow} 2\int_0^t B_u dB_u,
$$
so by the completeness of $L_G^2(\Omega)$,
$\sum_{j=0}^{N-1}(B_{t_{j+1}^{N}}-B_{t_j^{N}})^2$ must also converge
in $L_G^2(\Omega)$.

\begin{def1}
By the argument above, we define
\begin{eqnarray*}
\langle B \rangle_t&:=&\lim_{||\Pi_N||\rightarrow
0}\sum_{j=0}^{N-1}(B_{t_{j+1}^{N}}-B_{t_j^{N}})^2\\
     &=& B_t^2-2\int_0^t B_r dB_r,
     \end{eqnarray*}
and call $\langle B \rangle$ the quadratic variation process of
G-Brownian Motion.

\end{def1}

%
%

We now define the integral of a process $\eta \in M_G^1(0,T)$ with
respect to $\langle B \rangle.$ Firstly, we define a mapping:
$$
Q_{0,T}(\eta)=\int_0^T \eta_t d \langle B
\rangle_t:=\sum_{j=0}^{N-1} \xi_j(\langle B
\rangle_{t_{j+1}}-\langle B \rangle_{t_j}):M_G^{1,0}(0,T)\rightarrow
L_G^1(\Omega_T).
$$
Furthermore, we have the following lemma, which allows as to extend
this mapping to $M_G^1(0,T).$

\begin{lemma}
For each $\eta \in M_G^{1,0}(0,T),$
$$
\E[|\int_0^T \eta_t d\langle B \rangle_t|] \leq \bar{\sigma}^2
\E[\int_0^T |\eta_t|dt].
$$
\end{lemma}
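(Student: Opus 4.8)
The plan is to establish the bound $\E[|\int_0^T \eta_t\,d\langle B\rangle_t|] \leq \bar{\sigma}^2\,\E[\int_0^T|\eta_t|\,dt]$ for a step process $\eta_t(\omega)=\sum_{k=0}^{N-1}\xi_k(\omega)I_{[t_k,t_{k+1})}(t)$ in $M_G^{1,0}(0,T)$ by exploiting two structural facts about the quadratic variation process: first, that its increments $\langle B\rangle_{t_{k+1}}-\langle B\rangle_{t_k}$ are nonnegative and, second, that $\bar{\sigma}^2(t_{k+1}-t_k) - (\langle B\rangle_{t_{k+1}}-\langle B\rangle_{t_k})$ is also nonnegative (indeed $\langle B\rangle$ has increments dominated pathwise by $\bar{\sigma}^2\,dt$, since $\underline{\sigma}^2(t-s)\le \langle B\rangle_t-\langle B\rangle_s\le\bar{\sigma}^2(t-s)$). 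These are standard facts from Peng's theory that I will either cite or derive quickly from the definition of $\langle B\rangle$ as an $L_G^2$-limit of sums of squared increments together with the $G$-normal law $B_1\overset{d}{=}N(\{0\}\times[\underline{\sigma}^2,\bar{\sigma}^2])$.

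First I would use the triangle inequality for $\E$ (subadditivity plus positive homogeneity) to write
\[
\E\Bigl[\Bigl|\sum_{k=0}^{N-1}\xi_k(\langle B\rangle_{t_{k+1}}-\langle B\rangle_{t_k})\Bigr|\Bigr]\le \E\Bigl[\sum_{k=0}^{N-1}|\xi_k|(\langle B\rangle_{t_{k+1}}-\langle B\rangle_{t_k})\Bigr],
\]
where I have used that $\langle B\rangle_{t_{k+1}}-\langle B\rangle_{t_k}\ge 0$ to pull the absolute value inside. Next I would write $|\xi_k|(\langle B\rangle_{t_{k+1}}-\langle B\rangle_{t_k}) = \bar{\sigma}^2|\xi_k|(t_{k+1}-t_k) - |\xi_k|\bigl(\bar{\sigma}^2(t_{k+1}-t_k)-(\langle B\rangle_{t_{k+1}}-\langle B\rangle_{t_k})\bigr)$ and observe the second term is nonnegative. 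Then monotonicity of $\E$ gives
\[
\E\Bigl[\sum_{k=0}^{N-1}|\xi_k|(\langle B\rangle_{t_{k+1}}-\langle B\rangle_{t_k})\Bigr]\le \bar{\sigma}^2\,\E\Bigl[\sum_{k=0}^{N-1}|\xi_k|(t_{k+1}-t_k)\Bigr]=\bar{\sigma}^2\,\E\Bigl[\int_0^T|\eta_t|\,dt\Bigr],
\]
the last equality being the definition of the Bochner integral of the step process $|\eta|$.

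The main obstacle is justifying the pathwise domination $\langle B\rangle_{t_{k+1}}-\langle B\rangle_{t_k}\le\bar{\sigma}^2(t_{k+1}-t_k)$ rigorously within the sublinear framework, since $\langle B\rangle$ is only defined as an $L_G^2$-limit rather than pathwise a priori; one wants to know the limiting random variable inherits the bound $\sum_j(B_{t_{j+1}}-B_{t_j})^2\le$ something, which is not literally true at the prelimit level. The clean route, which I would follow, is to avoid pathwise arguments and instead use the known identity $\E_{t_k}[\langle B\rangle_{t_{k+1}}-\langle B\rangle_{t_k}]=\bar{\sigma}^2(t_{k+1}-t_k)$ together with $\E_{t_k}[-(\langle B\rangle_{t_{k+1}}-\langle B\rangle_{t_k})]=-\underline{\sigma}^2(t_{k+1}-t_k)$ — both standard (Peng, \cite{P10}) — and apply property (iv) of the conditional expectation to the bounded $\Omega_{t_k}$-measurable factor $|\xi_k|$ (first reducing to bounded $\xi_k$ by density, then passing to the limit). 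Processing the sum term by term from $k=N-1$ down to $0$ via the tower property (v) then yields exactly the stated inequality; I expect the bookkeeping in this induction, rather than any deep idea, to be the only real work.
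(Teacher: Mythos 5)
The paper states this lemma without proof: it sits in the preliminaries as a result recalled from Peng \cite{P10}, so there is no in-paper argument to compare against, and your proposal must be judged on its own terms. On those terms it is essentially correct, and your first (pathwise) route is the cleanest way to close it. The obstacle you flag --- justifying $0\le\langle B\rangle_{t_{k+1}}-\langle B\rangle_{t_k}\le\bar{\sigma}^2(t_{k+1}-t_k)$ quasi-surely --- is real if you try to read it off the prelimit sums, but it is not resolved that way: it follows from the fact (stated in the proposition immediately after this lemma) that the increment is maximally distributed on $[\underline{\sigma}^2(t_{k+1}-t_k),\,\bar{\sigma}^2(t_{k+1}-t_k)]$, so that by Remark~\ref{md} one has $\E[\varphi(\cdot)]=\max_{v\in\Gamma}\varphi(v)$; applying this to $\varphi(x)=((x-\bar{\sigma}^2(t_{k+1}-t_k))^{+})\wedge1$ and to $\varphi(x)=(x^{-})\wedge1$ gives both bounds quasi-surely, exactly the device the paper itself uses later in Lemma~\ref{0qs}. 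With those bounds, your chain of inequalities closes by monotonicity alone. One concrete caution about your fallback conditional-expectation route: replacing the terms $|\xi_k|(\langle B\rangle_{t_{k+1}}-\langle B\rangle_{t_k})$ one at a time by $\bar{\sigma}^2|\xi_k|(t_{k+1}-t_k)$ and re-expanding with subadditivity at each stage leaves you with $\bar{\sigma}^2\sum_k\E[|\xi_k|](t_{k+1}-t_k)$, which \emph{dominates} $\bar{\sigma}^2\E[\int_0^T|\eta_t|\,dt]$ rather than being dominated by it; the correct bookkeeping is to write $\sum_k|\xi_k|(\langle B\rangle_{t_{k+1}}-\langle B\rangle_{t_k})=\bar{\sigma}^2\sum_k|\xi_k|(t_{k+1}-t_k)+R$ with $R=\sum_k|\xi_k|\bigl((\langle B\rangle_{t_{k+1}}-\langle B\rangle_{t_k})-\bar{\sigma}^2(t_{k+1}-t_k)\bigr)$, prove $\E[R]=0$ by backward induction using properties (ii), (iv), (v) of the conditional expectation, and then invoke subadditivity of $\E$ exactly once. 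Either route, carried out this way, is a complete proof.
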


Here two properties w.r.t $\langle B \rangle$ and its integral which
would be used in the next part. The proof can be found in
\cite{P10}.

\begin{prop}
$(i)$ for $\eta \in M_G^2(0,T),$ we have
$$
\E[(\int_0^T \eta_t dB_t)^2]=\E[\int_0^T \eta_t^2 d\langle B
\rangle_t].
$$

$(ii)$ for each fixed $s,t \geq 0,$ $\langle B \rangle_{t+s}-\langle
B \rangle_s$ is identically distributed with $\langle B \rangle_t$
and independent from $\Omega_s.$ In addition, $\langle B \rangle_t$
is
$N([\underline{\sigma}^2t,\bar{\sigma}^2t]\times{0})-distributed.$

\end{prop}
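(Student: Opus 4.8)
I would prove $(i)$ first for simple integrands and then pass to the limit, and $(ii)$ by deriving a stationary-increment representation of $\langle B\rangle$ and combining it with the scaling property of G-Brownian motion.

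For $(i)$, take $\eta\in M_G^{2,0}(0,T)$ of the form $\eta_t=\sum_{k=0}^{N-1}\xi_k I_{[t_k,t_{k+1})}(t)$ with $\xi_k\in L_G^2(\Omega_{t_k})$; since simple processes with bounded coefficients are dense in $M_G^2(0,T)$, it suffices to treat the case in which every $\xi_k$ is bounded. Writing $\Delta_kB:=B_{t_{k+1}}-B_{t_k}$, expand the square
$$
\Big(\int_0^T\eta_t\,dB_t\Big)^2=\sum_k\xi_k^2(\Delta_kB)^2+2\sum_{j<k}\xi_j\xi_k\,\Delta_jB\,\Delta_kB .
$$
For the diagonal terms use the identity
$$
(\Delta_kB)^2=(\langle B\rangle_{t_{k+1}}-\langle B\rangle_{t_k})+2\int_{t_k}^{t_{k+1}}(B_u-B_{t_k})\,dB_u ,
$$
which follows directly from the definition $\langle B\rangle_u=B_u^2-2\int_0^uB_r\,dB_r$; for an off-diagonal term with $j<k$, note that $\xi_j\xi_k\,\Delta_jB\in L_G^2(\Omega_{t_k})$ is constant on $[t_k,t_{k+1})$, so $2\xi_j\xi_k\,\Delta_jB\,\Delta_kB=2\int_{t_k}^{t_{k+1}}\xi_j\xi_k\,\Delta_jB\,dB_u$. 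Assembling these pieces yields the algebraic identity
$$
\Big(\int_0^T\eta_t\,dB_t\Big)^2=\int_0^T\eta_t^2\,d\langle B\rangle_t+2\int_0^T\psi_t\,dB_t
$$
for an explicit integrand $\psi\in M_G^2(0,T)$ (boundedness of the $\xi_k$ is exactly what makes $\psi\in M_G^2(0,T)$). Applying $\E$ and the zero-mean property $\E[Y+\int_0^T\psi_t\,dB_t]=\E[Y]$ of the G-It\^o integral (the unconditional case of the property $\E[X+\int_r^T\eta_u\,dB_u|\Omega_s]=\E[X|\Omega_s]$ stated above, with $Y=\int_0^T\eta_t^2\,d\langle B\rangle_t\in L_G^1(\Omega_T)$) then gives the claimed equality for simple $\eta$. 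For general $\eta\in M_G^2(0,T)$, pick simple bounded $\eta^n\to\eta$ in $M_G^2(0,T)$; by the It\^o isometry-type bound $\E[|\int_0^T\eta\,dB|^2]\le\bar\sigma^2\E[\int_0^T|\eta|^2\,dt]$ one has $\int_0^T\eta^n\,dB\to\int_0^T\eta\,dB$ in $L_G^2(\Omega_T)$, so the left-hand sides converge; and since $|(\eta^n)^2-\eta^2|\le|\eta^n-\eta|\,|\eta^n+\eta|$, H\"older's inequality gives $(\eta^n)^2\to\eta^2$ in $M_G^1(0,T)$, so by the bound $\E[|\int_0^T\zeta\,d\langle B\rangle|]\le\bar\sigma^2\E[\int_0^T|\zeta|\,dt]$ the right-hand sides converge too, and $(i)$ follows.

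For $(ii)$, the main computation is the increment representation
$$
\langle B\rangle_{t+s}-\langle B\rangle_s=(B_{t+s}-B_s)^2-2\int_s^{t+s}(B_r-B_s)\,dB_r ,
$$
obtained from $\langle B\rangle_u=B_u^2-2\int_0^uB_r\,dB_r$ by writing $B_{t+s}=(B_{t+s}-B_s)+B_s$, splitting $\int_0^{t+s}=\int_0^s+\int_s^{t+s}$, and cancelling the terms involving $B_s$. The shifted process $\widetilde B_r:=B_{r+s}-B_s$, $r\ge0$, is again a G-Brownian motion for the same $G$ and is independent of $\Omega_s$ (both facts are built into the construction of G-Brownian motion through its stationary, independent increments), and the right-hand side above equals $\widetilde B_t^2-2\int_0^t\widetilde B_r\,d\widetilde B_r=\langle\widetilde B\rangle_t$; hence $\langle B\rangle_{t+s}-\langle B\rangle_s\overset{d}{=}\langle B\rangle_t$ and it is independent of $\Omega_s$. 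For the distribution, the scaling $(B_{\lambda r})_{r\ge0}\overset{d}{=}(\sqrt{\lambda}\,B_r)_{r\ge0}$ (immediate from the construction) gives $\langle B\rangle_{\lambda t}\overset{d}{=}\lambda\langle B\rangle_t$; combined with the stationarity and independence of increments just established, one obtains, for $a,b\ge0$, that $a\langle B\rangle_1+b\overline{\langle B\rangle_1}\overset{d}{=}(a+b)\langle B\rangle_1$, where $\overline{\langle B\rangle_1}$ is an independent copy of $\langle B\rangle_1$, i.e. $\langle B\rangle_1$ is maximally distributed. Finally $\E[\langle B\rangle_1]=\E[B_1^2]=\bar\sigma^2$ and $-\E[-\langle B\rangle_1]=-\E[-B_1^2]=\underline\sigma^2$ (the It\^o integrals drop out by the zero-mean property), so by Remark \ref{md}, $\langle B\rangle_1\overset{d}{=}N([\underline\sigma^2,\bar\sigma^2]\times\{0\})$, and by scaling $\langle B\rangle_t\overset{d}{=}N([\underline\sigma^2t,\bar\sigma^2t]\times\{0\})$.

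The main obstacle is in $(i)$: since $\E$ is only sublinear, the familiar ``the martingale part has mean zero'' cancellation cannot be carried out term by term, so the whole argument must be routed through the single algebraic identity that writes $(\int_0^T\eta\,dB)^2-\int_0^T\eta^2\,d\langle B\rangle$ as one G-It\^o integral and then through the zero-mean property of that integral; the accompanying verifications — that the resulting integrand $\psi$ genuinely lies in $M_G^2(0,T)$, and that the $M_G^2$ and $M_G^1$ approximations carry through — are where the real care is needed. Part $(ii)$ is comparatively routine once the increment representation is written down.
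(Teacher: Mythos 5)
The paper does not prove this proposition at all — it states it as a known fact and defers to \cite{P10} — and your argument is correct and is essentially the standard proof found there: for $(i)$, rewrite $(\int_0^T\eta\,dB)^2-\int_0^T\eta^2\,d\langle B\rangle$ for simple bounded $\eta$ as a single G-It\^o integral and invoke the zero-mean property $\E[Y+\int\psi\,dB]=\E[Y]$ (the only cancellation available under a sublinear $\E$), then pass to the limit; for $(ii)$, use the increment identity $\langle B\rangle_{t+s}-\langle B\rangle_s=(B_{t+s}-B_s)^2-2\int_s^{t+s}(B_r-B_s)\,dB_r$ together with scaling to deduce maximality of the distribution and compute $\E[\pm\langle B\rangle_t]=\E[\pm B_t^2]$. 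Your closing remarks correctly locate where the care is needed (membership of the remainder integrand in $M_G^2(0,T)$ and the $M_G^1$ convergence of $(\eta^n)^2$), so there is nothing to add.
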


For the multi-dimensional case. Suppose $(B_t)_{t \geq 0}$ be a
d-dimensional G-Brownian motion. For each $a \in \R^d,$ we can
easily check that $(B_t^a)_{t \geq 0}:=(<B_t,a>)_{t \geq 0} $ is a
1-dimensional $G_a$-Brownian Motion, with $G_a(\alpha)=\frac12
(\sigma^2_{aa^T}\alpha^+ - \sigma^2_{-aa^T}\alpha^-),$ where
$\sigma^2_{aa^T}=2G(aa^T)$ and $\sigma^2_{-aa^T}=-2G(-aa^T).$ Then
the related integrals with respect to $B_t^a$ and $\langle B^a
\rangle_t$ are same as the one dimensional case. Furthermore, for
any $a,\bar{a} \in \R^d, $ we define the mutual variation process by
\begin{eqnarray*}
\langle B^a, B^{\bar{a}} \rangle_t &:=& \frac 14 [\langle
B^a+B^{\bar{a}}\rangle_t-\langle B^a-B^{\bar{a}}\rangle_t]\\
                                   &=& \frac 14 [\langle B^{a+\bar{a}}\rangle_t -\langle B^{a-\bar{a}}\rangle_t
                                   ].
                                   \end{eqnarray*}
We can prove that for a sequence of partition
$\pi_t^n,n=1,2,\cdots,$ of $[0,t]$ with $||\pi_t^n||\rightarrow 0,$
we have
$$
\lim_{n\rightarrow \infty}
\sum_{k=0}^{n-1}(B^a_{t_{k+1}^n}-B^a_{t_{k}^n})
(B^{\bar{a}}_{t_{k+1}^n}-B^{\bar{a}}_{t_{k}^n})=\langle
B^a,B^{\bar{a}} \rangle_t.
$$

Now we give the famous $It\hat{o}'s$ formula in G-framework. More
recent progresses can be read from \cite{L-P}.

\begin{thm}{(G-$It\hat{o}'s$ Formula)}
Let $\Phi$ be a twice continuous function on $\R^n$ with polynomial
growth growth for the first and second order derivatives. X is a
$It\hat{o}$ process, i.e.
$$
X_t^\nu=X_0^\nu+\int_0^t \alpha_s^\nu ds +\int_0^t \eta_s^{\nu ij}
d\langle B^i,B^j \rangle_+ \int_0^t \beta _s^{\nu j }dB_s^j
$$
where $\nu=1,...,n,\ i,j=1,...,d,$ $\alpha_s^\nu,\eta_s^{\nu
ij},\beta _s^{\nu j }$ are bounded processes in $M_G^2(0,T).$ Here
repeated indices means summation over the same indices. Then for
each $t \geq s \geq 0$ we have in $L_G^2(\Omega_t):$

\begin{eqnarray*}
\Phi(X_t)-\Phi(X_s)&=&\int_s^t \partial_{x^{\nu}} \Phi(X_u) \beta
_u^{\nu j } dB_u^j + \int_s^t \partial_{x^{\nu}} \Phi(X_u)
\alpha_u^{\nu} du\\
                   &+&\int_s^t[\partial_{x^{\nu}}\Phi(X_u)\eta_u^{\nu
ij}+\frac12 \partial^2_{x^{\mu} x^{\nu}}\Phi(X_u) \beta _u^{\mu i
}\beta _u^{\nu j }]d \langle B^i, B^j \rangle_u
\end{eqnarray*}
\end{thm}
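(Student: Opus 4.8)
The plan is to prove the G-It\^o formula by the standard localization-and-approximation scheme, reducing the general statement to a case where everything is a simple process and $\Phi$ is a nice polynomial-type function, and then passing to the limit using the $L_G^2$-estimates from the earlier lemmas and propositions.

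First I would reduce to the case where $\alpha^\nu, \eta^{\nu ij}, \beta^{\nu j}$ are \emph{simple} (piecewise-constant) processes in $M_G^{2,0}(0,T)$ and where $\Phi\in C^2(\R^n)$ together with its first and second derivatives is bounded; the polynomial-growth case follows afterward by a truncation argument, stopping $X$ before it exits a large ball and letting the radius tend to infinity, using that $\hat{\mathbb E}[\sup_u |X_u|^2]<\infty$ for It\^o processes with bounded coefficients. For the simple case, fix a partition $s=t_0<t_1<\cdots<t_N=t$ refining the partition on which the coefficients are constant, write the telescoping sum $\Phi(X_t)-\Phi(X_s)=\sum_{k=0}^{N-1}\bigl(\Phi(X_{t_{k+1}})-\Phi(X_{t_k})\bigr)$, and apply the second-order Taylor expansion
\[
\Phi(X_{t_{k+1}})-\Phi(X_{t_k})=\partial_{x^\nu}\Phi(X_{t_k})\,\Delta X_{t_k}^\nu+\tfrac12\partial^2_{x^\mu x^\nu}\Phi(X_{t_k})\,\Delta X_{t_k}^\mu \Delta X_{t_k}^\nu+R_k,
\]
where $\Delta X_{t_k}=X_{t_{k+1}}-X_{t_k}$ and $R_k$ is the Taylor remainder controlled by the modulus of continuity of $D^2\Phi$.

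Next I would substitute $\Delta X_{t_k}^\nu=\int_{t_k}^{t_{k+1}}\alpha^\nu_u\,du+\int_{t_k}^{t_{k+1}}\eta^{\nu ij}_u\,d\langle B^i,B^j\rangle_u+\int_{t_k}^{t_{k+1}}\beta^{\nu j}_u\,dB_u^j$ into the linear term, which already reproduces the three integrals in the claimed formula. The quadratic term $\Delta X^\mu\Delta X^\nu$ expands into nine pieces; the key point, exactly as in the classical theory, is that only the product of the $dB$-terms survives in the limit: $\int_{t_k}^{t_{k+1}}\beta^{\mu i}_u\,dB_u^i\cdot\int_{t_k}^{t_{k+1}}\beta^{\nu j}_u\,dB_u^j$ converges, after summation, to $\int_s^t\beta^{\mu i}_u\beta^{\nu j}_u\,d\langle B^i,B^j\rangle_u$ by the definition of mutual variation and the $L_G^2$ convergence of Riemann sums to the mutual-variation integral stated in the Preliminaries, while the $du\cdot du$, $du\cdot d\langle B\rangle$, $d\langle B\rangle\cdot d\langle B\rangle$, $du\cdot dB$ and $d\langle B\rangle\cdot dB$ cross terms all tend to $0$ in $L_G^1$ because they carry an extra factor of order $\|\Pi_N\|$ (using $\hat{\mathbb E}[|\int_{t_k}^{t_{k+1}}\cdots dt|^2]$ and $\hat{\mathbb E}[|\int_{t_k}^{t_{k+1}}\cdots d\langle B\rangle|^2]$ estimates of order $(t_{k+1}-t_k)^2$, Lemma on the It\^o isometry bound, and Cauchy--Schwarz under $\hat{\mathbb E}$). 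One must also replace $\partial_{x^\nu}\Phi(X_{t_k})$ and $\partial^2_{x^\mu x^\nu}\Phi(X_{t_k})$ evaluated at the left endpoint by the continuous integrands $\partial_{x^\nu}\Phi(X_u)$ and $\partial^2_{x^\mu x^\nu}\Phi(X_u)$; this costs another error term estimated via the $L_G^2$-modulus of continuity of $u\mapsto X_u$ and the boundedness/uniform continuity of $D\Phi,D^2\Phi$.

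The main obstacle I expect is the careful bookkeeping of these error terms under the sublinear expectation: unlike the linear case there is no martingale orthogonality to kill cross terms for free, so one leans on monotonicity and sub-additivity of $\hat{\mathbb E}$ together with the moment bounds $\hat{\mathbb E}[(B^a_{t+h}-B^a_t)^2]\le \sigma^2_{aa^T}h$ and the It\^o-isometry-type inequality $\hat{\mathbb E}[|\int \eta\,dB|^2]\le\bar\sigma^2\,\hat{\mathbb E}[\int|\eta|^2\,dt]$, plus $\hat{\mathbb E}[|\int\eta\,d\langle B\rangle|]\le\bar\sigma^2\hat{\mathbb E}[\int|\eta|\,dt]$, to show each family of remainders is $o(1)$ in the appropriate $L_G^p$-norm. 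Once the simple-coefficient, bounded-$\Phi$ version is established, a density argument in $M_G^2(0,T)$ for the coefficients and the truncation argument for $\Phi$ upgrade it to the stated generality, with all identities holding in $L_G^2(\Omega_t)$; the restriction to $t\ge s\ge0$ and the interpretation of repeated indices as summation are used throughout without further comment.
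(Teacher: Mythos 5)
First, note that the paper does not prove this theorem at all: it is recalled in the Preliminaries as a known result of Peng, with \cite{P10} and \cite{L-P} cited for details, so there is no in-paper proof to compare against. Judged on its own terms, your sketch follows the standard Taylor-expansion route (telescoping sum, second-order expansion, substitution of the three increments, identification of the surviving quadratic term with the mutual-variation integral, and vanishing of the cross terms via the estimates $\hat{\mathbb{E}}[|\int\eta\,dB|^2]\le\bar\sigma^2\hat{\mathbb{E}}[\int|\eta|^2dt]$ and $\hat{\mathbb{E}}[|\int\eta\,d\langle B\rangle|]\le\bar\sigma^2\hat{\mathbb{E}}[\int|\eta|dt]$ together with Cauchy--Schwarz), and that part is correct: it is exactly how the result is established in the literature for $\Phi$ with bounded, uniformly Lipschitz derivatives and simple coefficients, followed by density in $M_G^2(0,T)$.

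The genuine gap is your treatment of the polynomial-growth hypothesis on $D\Phi$, $D^2\Phi$ by ``stopping $X$ before it exits a large ball.'' In the G-framework this step does not come for free: first-exit times are not admissible stopping times in the quasi-sure sense without substantial extra machinery (this is precisely the subject of the cited paper \cite{L-P}), stopped integrands need not remain in $M_G^2(0,T)$, and the quantity $\hat{\mathbb{E}}[\sup_{u\le T}|X_u|^2]$ that you invoke is not even defined on $L_G^2(\Omega)$ without a Doob-type maximal inequality and an enlargement of the space, none of which the Preliminaries provide. The standard repair avoids stopping times altogether: approximate $\Phi$ by functions $\Phi_N$ with bounded, Lipschitz first and second derivatives agreeing with $\Phi$ on $\{|x|\le N\}$, and control $\hat{\mathbb{E}}[|\Phi(X_t)-\Phi_N(X_t)|]$ and the corresponding integral terms using the fact that $X_t$ has finite moments of all orders, uniformly on $[0,T]$ (the coefficients are bounded and $B_t$ is G-normally distributed), together with H\"older- and Markov-type inequalities under $\hat{\mathbb{E}}$. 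With that substitution your argument goes through; as written, the localization step would fail.
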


\section{Complex Sublinear Expectation}

Here we try to define a sublinear expectation under complex case,
which means we have to decide what's "sublinear" here. From here on,
when we compare two complex numbers, we compare them as real
vectors. What we do here is to connect the G-expectation with
complex analysis, so we define complex sublinear expectation in the
following heuristic way.

Given a set $\Omega$ and a linear space $\mathcal {H}$ consisting of
real valued functions on $\Omega$, we suppose $\mathcal {H}$ is a
vector lattice and consider the following set of functions on
$\Omega$:
$$\mathcal {H}_\mathbb{C}=\{X+iY|X,Y\in\mathcal {H} \},\,where\,i^2=-1$$

\begin{def1}\label{1}
Suppose $(\Omega\ \mathcal {H}\ \E)$is a sublinear space, we define
a function $\EC$ from $\mathcal{H}_\mathbb{C}$ to complex field as:
$$
\begin{array}{crcl}
\EC:&\mathcal
{H}_\mathbb{C}&\mapsto&\mathbb{C}\\
\\
  &X+iY&\mapsto&\E[X]+i\E[Y]
 \end{array}$$
\end{def1}

\begin{rem}
Notice that for any $Z\in\mathcal {H}_\mathbb{C}$, $Z$ has a unique
expression as $X+iY$, so $\EC$ in well defined.

Here are some basic properties of $\EC$:\\

$(i)$ constant preserving:
$$
\EC(z)=z,\qquad \forall\;z\in\,\mathbb{C}$$

$(ii)$ positive
homogeneity:
$$\EC(\lambda Z)=\lambda\EC(Z),\qquad \lambda\geq0 \quad Z\in
\mathcal{H}_\mathbb{C}$$

$(iii)$ constant transferability:
$$\EC(Z+c)=\EC(Z)+c, \qquad c\in \mathbb{C} \quad Z\in
\mathcal{H}_\mathbb{C}$$

$(iv)$ monotonicity:
$$\EC(Z_1) \geq \EC(Z_2) \qquad if\; Z_1\geq Z_2$$

$(v)$ convexity:
$$\EC[\alpha Z_1+(1-\alpha)Z_2] \leq \alpha
\EC(Z_1)+(1-\alpha)\EC(Z_2), \qquad \alpha \in [0,1]$$

\end{rem}

\begin{rem}
Conversely, given some properties of the complex expectation, we can
define the sublinear expectation in a more general version:
$$ \EC\,:\,\HC \rightarrow \mathbb{C}$$ satisfying $$ \EC[X] \in
\mathbb{R} \quad for \, any \,X\in \mathcal{H}$$ and

\begin{eqnarray*}
(i)&\EC(c)=c,\;\forall c\in\mathbb{C}\\
(ii)&\EC(Z_1)\geq \EC(Z_2),\,Z_1\geq Z_2,\,Z_1,\ Z_2\in \HC\\
(iii)&\EC(Z_1+Z_2)\leq \EC(Z_1)+\EC(Z_2)\\
(iv)&\EC(\lambda Z)=\lambda\EC(Z),\,\lambda \geq 0\,\lambda \in \mathbb{R}\\
(v)&\EC(Z)\leq \EC(X)+i\EC(Y),\;Z=X+iY
\end{eqnarray*}

If $(v)$ in the above is changed to an equation, we get the same
sublinear expectation as definition (\ref{1}) by the representation
theorem in the real case.

\end{rem}

\begin{rem}
For each $\varphi \in C_{l.lip}(\mathbb{C})$,
we suppose
$\varphi(x+iy)=\varphi_1(x,y)+i\varphi_2(x,y)$.
Then we have
 $\varphi_i(x,y)\in
C_{l.lip}(\mathbb{R}^2) $. Also, if $\varphi_i(x,y)\in
C_{l.lip}(\mathbb{R}^2),$ $ i=1,2$, we have
$\varphi(x+iy)(:=\varphi_1(x,y)+i\varphi_2(x,y))$ belongs to
$C_{l.lip}(\mathbb{C})$.

\end{rem}

\begin{def1}
Suppose $Z=(Z_1,\cdots,Z_n)$ a n-dimensional vector on the complex
sublinear space $(\Omega,\;\mathcal {H}_\mathbb{C},\;\EC)$. The
functional from $C_{l.lip}(\mathbb{C}^n)$ to $\mathbb{C}$ defined by
\begin{eqnarray}
 \mathbb{F}_Z(\varphi)&:=&\EC[\varphi(Z)] \nonumber\\
                       &=&\mathbb{E}[\varphi_1(X,Y)]+\mathbb{E}[\varphi_2(X,Y)],
\end{eqnarray}
where $\varphi \in C_{l.lip}(\mathbb{C}^n)$ and
$\varphi(x+iy):=\varphi_1(x,y)+i\varphi_2(x,y)$, is called the
distribution of $Z$ under $\EC$. Also, the triple
$(\mathbb{C}^n,\,C_{l.lip}(\mathbb{C}^n),\,\mathbb{F}_Z)$ forms a
complex sublinear space.

\end{def1}

\begin{def1}
Two n-dimensional random vectors $Z_1$ and $Z_2$ defined on two
complex sublinear expectation spaces $(\Omega_1,\,\mathcal
{H}_\mathbb{C}^1,\,\EC^1)$ and $(\Omega_2,\,\mathcal
{H}_\mathbb{C}^2,\,\EC^2)$ respectively, are called identically
distributed if
$$ \EC^1[\varphi(Z_1)]=\EC^2[\varphi(Z_2)], \qquad  for
\,any \,\varphi \in C_{l.lip}(\mathbb{C}^n).$$ Such relation is
denoted by $Z_1\stackrel{d}{=}Z_2.$

\end{def1}

\begin{prop}\label{10}
Let $\OHEC$ be a complex sublinear expectation. $Z_1,\,Z_2$ are two
random variables and $Z_2$ satisfies
$$ \EC[Z_2]=-\EC[-Z_2] $$
Then we have
$$\EC[Z_1+cZ_2]=\EC[Z_1]+c\ \EC[Z_2],\quad for\,any\,c\in
\mathbb{C}.$$

\end{prop}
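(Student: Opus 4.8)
The plan is to reduce the complex statement to the real one by splitting everything into real and imaginary parts. Write $Z_1 = X_1 + iY_1$, $Z_2 = X_2 + iY_2$, and $c = a + ib$ with $a,b \in \mathbb{R}$. The hypothesis $\EC[Z_2] = -\EC[-Z_2]$ unpacks, via Definition \ref{1}, into $\E[X_2] = -\E[-X_2]$ and $\E[Y_2] = -\E[-Y_2]$; that is, both $X_2$ and $Y_2$ have the property that their $\E$ and $-\E[-\cdot]$ coincide. This is exactly the classical condition under which a random variable can be "pulled out" of a sublinear expectation with real scalars of either sign. So the first step is to record the known real fact: if $W \in \mathcal{H}$ satisfies $\E[W] = -\E[-W]$, then $\E[U + \lambda W] = \E[U] + \lambda\E[W]$ for all $U \in \mathcal{H}$ and all $\lambda \in \mathbb{R}$ (not just $\lambda \geq 0$). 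This follows from subadditivity in both directions together with positive homogeneity applied to $\pm\lambda W$; I would either cite it as standard from \cite{P10} or give the two-line argument.

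Next I would compute $\EC[Z_1 + cZ_2]$ directly. Expanding, $Z_1 + cZ_2 = (X_1 + aX_2 - bY_2) + i(Y_1 + bX_2 + aY_2)$, so by definition
\[
\EC[Z_1 + cZ_2] = \E[X_1 + aX_2 - bY_2] + i\,\E[Y_1 + bX_2 + aY_2].
\]
Now apply the real lemma repeatedly: since $X_2$ satisfies the hypothesis, $\E[X_1 + aX_2 - bY_2] = \E[X_1 - bY_2] + a\E[X_2]$, and then since $Y_2$ also satisfies the hypothesis, $\E[X_1 - bY_2] = \E[X_1] - b\E[Y_2]$. An identical manipulation handles the imaginary part: $\E[Y_1 + bX_2 + aY_2] = \E[Y_1] + b\E[X_2] + a\E[Y_2]$. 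Collecting terms,
\[
\EC[Z_1 + cZ_2] = \E[X_1] - b\E[Y_2] + a\E[X_2] + i\big(\E[Y_1] + b\E[X_2] + a\E[Y_2]\big).
\]

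Finally I would verify that the right-hand side equals $\EC[Z_1] + c\,\EC[Z_2]$. We have $\EC[Z_1] = \E[X_1] + i\E[Y_1]$ and $c\,\EC[Z_2] = (a+ib)(\E[X_2] + i\E[Y_2]) = (a\E[X_2] - b\E[Y_2]) + i(b\E[X_2] + a\E[Y_2])$; adding these reproduces exactly the expression above, completing the proof. I do not anticipate a genuine obstacle here — the result is essentially bookkeeping — but the one point requiring a little care is making sure the real "pull-out" lemma is invoked with the correct sign conventions, in particular that the hypothesis $\E[Z_2] = -\E[-Z_2]$ is genuinely needed for \emph{both} components $X_2$ and $Y_2$ and not just one, and that it licenses negative as well as nonnegative real multipliers (which is what forces us to go beyond property (ii) of the complex expectation). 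If one prefers, the argument can be phrased slightly more abstractly by first proving the auxiliary complex statement $\EC[Z_1 + \lambda Z_2] = \EC[Z_1] + \lambda\EC[Z_2]$ for real $\lambda$, then noting $\EC[iZ_2] = i\EC[Z_2]$ and $\EC[Z_1 + iZ_2]$ behaves analogously because $iZ_2 = -Y_2 + iX_2$ again has components satisfying the hypothesis, and finally combining the real-linear and $i$-linear cases to get full $\mathbb{C}$-linearity; but the direct computation above is shorter.
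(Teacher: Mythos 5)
Your proposal is correct and follows essentially the same route as the paper's own proof: decompose $Z_1$, $Z_2$, $c$ into real and imaginary parts, deduce $\E[X_2]=-\E[-X_2]$ and $\E[Y_2]=-\E[-Y_2]$ from the hypothesis, invoke the standard real pull-out property $\E[U+\lambda W]=\E[U]+\lambda\E[W]$ for symmetric $W$ and arbitrary real $\lambda$, and finish by bookkeeping. Your write-up is in fact somewhat more careful than the paper's in spelling out that the lemma must be applied to both components and for negative multipliers.
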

\begin{proof}
Suppose $Z_1=X_1+iY_1,\; Z_2=X_2+iY_2,\;c=x+iy$, where $X_i,\, Y_i
\in \mathcal {H},\;and \ x,\ y \in \mathbb{R}$. Since
$\EC[Z_2]=-\EC[-Z_2]$, we have
$$ \mathbb{E}[X_2]=-\mathbb{E}[-X_2],\quad
\mathbb{E}[Y_2]=-\mathbb{E}[-Y_2] $$

This leads to $$ \E[X+aX_2]=\E[X]+a\ \E[X_2],for \, a\in \R,X\in
\mathcal {H} $$ and similar result for $Y_2$.
\begin{eqnarray*}
\EC[Z_1+cZ_2]&=&\EC[(X_1+xX_2-yY_2)+i(Y_1+xY_2+yX_2)]\\
             &=&\E[X_1]+x\E[X_2]-y\E[Y_2]+i\E[Y_1]+ix\E[Y_2]+iy\E[X_2]\\
             &=&\EC[Z_1]+c\EC[Z_2]
\end{eqnarray*}

\end{proof}

Another important definition in sublinear expectation is the
independence. Since our main purpose is to connect real sublinear
expectation with complex analysis, we find the following definition
does this job.

\begin{def1}
In a complex sublinear space $\OHEC$, a random vector $Z_2\in \HC^n$
is said to be independent from another one $Z_1\in \HC^m$ if for
each test function $\varphi \in C_{l.lip}(\mathbb{C}^{m+n})$ we have
$$\EC[\varphi(Z_1,Z_2)]=\EC[\EC[\varphi(z,Z_2)]_{z=Z_1}]$$

\end{def1}

Here is an important lemma to connect the complex case with the real
one.

\begin{lemma}\label{indep}
Let $Z_1=X_1+iY_1,\ Z_2=X_2+iY_2$, where $X_1,\ Y_1 \in \mathcal
{H}^n,\, X_2,\ Y_2 \in \mathcal {H}^m$. Then $Z_2$ is independent of
$Z_1$ if and only if $(X_2,\ Y_2)$ is independent of $(X_1,\ Y_1)$
under $\E$.

\end{lemma}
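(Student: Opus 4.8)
The statement is an equivalence between complex independence of $Z_2=X_2+iY_2$ from $Z_1=X_1+iY_1$ under $\EC$ and real independence of the pair $(X_2,Y_2)$ from the pair $(X_1,Y_1)$ under $\E$. The natural strategy is to unravel both sides into statements about the real sublinear expectation $\E$ applied to $C_{l.lip}$ test functions, and then match them via a dictionary between complex-variable test functions $\varphi\in C_{l.lip}(\C^{m+n})$ and real-variable test functions $\psi\in C_{l.lip}(\R^{2m}\times\R^{2n})$. The key observation, already recorded in the remark preceding the lemma, is that $\varphi(z+iw)=\varphi_1(z,w)+i\varphi_2(z,w)$ with $\varphi\in C_{l.lip}(\C^{k})$ if and only if $\varphi_1,\varphi_2\in C_{l.lip}(\R^{2k})$; moreover every pair $(\psi_1,\psi_2)$ of real $C_{l.lip}$ functions arises this way. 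This bijection is what powers both directions.

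\textbf{Key steps.} First I would write out the definition of $Z_2\bot Z_1$: for all $\varphi\in C_{l.lip}(\C^{m+n})$,
\[
\EC[\varphi(Z_1,Z_2)]=\EC\big[\,\EC[\varphi(z,Z_2)]_{z=Z_1}\,\big].
\]
Using Definition~\ref{1}, the left side equals $\E[\varphi_1(X_1,Y_1,X_2,Y_2)]+i\,\E[\varphi_2(X_1,Y_1,X_2,Y_2)]$, i.e. it is determined by the real expectations of the two real functions $\varphi_1,\varphi_2\in C_{l.lip}(\R^{2m+2n})$. For the right side, I would first compute the inner complex expectation $h(z):=\EC[\varphi(z,Z_2)]=\E[\varphi_1(z,\cdot,X_2,Y_2)]+i\,\E[\varphi_2(z,\cdot,X_2,Y_2)]$ where $z$ ranges over $\C^m$, identified with $\R^{2m}$; writing $h(z)=h_1(u,v)+ih_2(u,v)$ for $z=u+iv$, one checks $h_1,h_2\in C_{l.lip}(\R^{2m})$ using the Lipschitz/polynomial-growth stability of $\E$ (this is the standard fact used in the real theory that $\E[\varphi(x,\xi)]$ is again $C_{l.lip}$ in $x$). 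Then $\EC[h(Z_1)]=\E[h_1(X_1,Y_1)]+i\,\E[h_2(X_1,Y_1)]$. Equating real and imaginary parts of the two sides, $Z_2\bot Z_1$ is seen to be equivalent to the conjunction, for $\ell=1,2$,
\[
\E[\varphi_\ell(X_1,Y_1,X_2,Y_2)]=\E\big[\,g_\ell(X_1,Y_1)\,\big],\qquad g_\ell(u,v):=\E[\varphi_\ell(u,v,X_2,Y_2)],
\]
but one must be careful: $g_\ell$ here involves only $\E$ of the single real function $\varphi_\ell$, whereas $h_\ell$ a priori could mix $\varphi_1$ and $\varphi_2$ — so I would check that in fact $h_1(u,v)=\E[\varphi_1(u,v,X_2,Y_2)]$ and $h_2(u,v)=\E[\varphi_2(u,v,X_2,Y_2)]$, which is immediate from Definition~\ref{1} applied to the inner expectation. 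Thus $Z_2\bot Z_1$ under $\EC$ is equivalent to: for every real-valued $\phi\in C_{l.lip}(\R^{2m+2n})$ of the form $\phi=\varphi_\ell$,
\[
\E[\phi(X_1,Y_1,X_2,Y_2)]=\E\big[\,\E[\phi(u,v,X_2,Y_2)]_{(u,v)=(X_1,Y_1)}\,\big].
\]
Finally, since $\{\varphi_1:\varphi\in C_{l.lip}(\C^{m+n})\}=\{\varphi_2:\varphi\in C_{l.lip}(\C^{m+n})\}=C_{l.lip}(\R^{2(m+n)})$ (again by the remark — take $\varphi_2\equiv 0$, resp.\ $\varphi_1\equiv 0$), the family of constraints above ranges over \emph{all} $\phi\in C_{l.lip}(\R^{2m+2n})$, which is exactly the definition of $(X_2,Y_2)\bot(X_1,Y_1)$ under $\E$. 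This gives both implications at once.

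\textbf{Main obstacle.} The routine but essential technical point is the measurability/regularity claim that the inner expectation $z\mapsto\EC[\varphi(z,Z_2)]$ produces functions $h_1,h_2$ that are again in $C_{l.lip}$, so that the outer $\EC[h(Z_1)]$ is legitimate and unwinds via Definition~\ref{1}; this rests on the corresponding real-variable fact, which in the $G$-expectation literature follows from the $1$-Lipschitz (in $L^1$) continuity of $\E$ together with the polynomial growth bookkeeping. The only genuinely delicate bookkeeping is keeping the complexification dictionary straight — in particular confirming that the complex independence identity does not secretly couple $\varphi_1$ and $\varphi_2$ — and checking that the test-function class $C_{l.lip}(\C^{m+n})$ is rich enough to isolate, separately, arbitrary real $C_{l.lip}$ functions in the real and imaginary slots. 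Once that is in place, both directions are a direct comparison of real and imaginary parts, so I expect the proof to be short.
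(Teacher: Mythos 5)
Your proposal is correct and follows essentially the same route as the paper: both directions rest on the decomposition $\varphi=\varphi_1+i\varphi_2$ with $\varphi_1,\varphi_2\in C_{l.lip}(\R^{2(m+n)})$, and the paper's forward direction (taking $\psi=\varphi+i\cdot 0$) is exactly your observation that real test functions embed as complex ones with vanishing imaginary part. Your added care about the inner expectation $h(z)$ not coupling $\varphi_1$ and $\varphi_2$ and remaining $C_{l.lip}$ is a point the paper passes over silently, but it does not change the argument.
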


\begin{proof}
If $Z_2$ is independent of $Z_1$, we need to show that for any
$\varphi \in C_{l.lip}(\mathbb{R}^{2n+2m})$ we have
$$ \E[\varphi(X_1, Y_1, X_2,
Y_2)]=\E[\E[\varphi(x,y,X_2,Y_2)]_{(x,y)=(X_1,Y_1)}]$$

For any fixed $ \varphi \in C_{l.lip}(\mathbb{R}^{2n+2m})$, we take
$\psi := \varphi+i\cdot 0=\varphi$, which belongs to
$C_{l.lip}(\mathbb{C}^{n+m})$.

By the definition of independence under complex case, we have
\begin{eqnarray*}
\E[\varphi(X_1,Y_1,X_2,Y_2)]&=& \EC[\psi(Z_1,Z_2)]\\
                            &=& \EC[\EC[\psi(z,Z_2)]_{z=Z_1}]\\
                            &=& \EC[\E[\varphi(x,y,X_2,Y_2)]_{(x,y)=(X_1,Y_1)}]\\
                            &=& \E[\E[\varphi(x,y,X_2,Y_2)]_{(x,y)=(X_1,Y_1)}]
\end{eqnarray*}

For the backward implication, if $\varphi\in
C_{l.lip}(\mathbb{C}^{m+n})$, we suppose
$\varphi=\varphi_1+i\varphi_2,\ where \varphi_1,\ \varphi_2\in
C_{l.lip}(\mathbb{R}^{2n+2m})$

\begin{eqnarray*}
\EC[\varphi(Z_1,Z_2)]&=&\E[\varphi_1(X_1,Y_1,X_2,Y_2)]+i\E[\varphi_2(X_1,Y_1,X_2,Y_2)]\\
                     &=&\E[\E[\varphi_1(x,y,X_2,Y_2)]_{(x,y)=(X_1,Y_1)}]+i\E[\E[\varphi_2(x,y,X_2,Y_2)]_{(x,y)=(X_1,Y_1)}]\\
                     &=&\EC[\E[\varphi_1(x,y,X_2,Y_2)]_{(x,y)=(X_1,Y_1)}+i\E[\varphi_2(x,y,X_2,Y_2)]_{(x,y)=(X_1,Y_1)}]\\
                     &=&\EC[\EC[\varphi(z,Z_2)]_{z=Z_1}]
\end{eqnarray*}

\end{proof}

%

\begin{rem}{(Completion of Complex Sublinear Expectation Space)}
For a real sublinear expectation space $\OHE$, we denote
$(\Omega,{\mathcal {H}}^p,\E)$ the completion of the real sublinear
expectation space under norm $(\E[|\cdot|^p])^{1/p}$. Then we define
${\mathcal{H}}_{\C}^p=\{X+iY \mid X,Y\in \hat{\mathcal
{H}}^p \}$, and it is also a banach space under norm\\
  $(\E[|\cdot|^p])^{1/p}$ by the completeness of $\hat{\mathcal
{H}}^p$.

\end{rem}

\section{Complex G-normal distribution and G-Brownian Motion}

Here we define the complex G-normal distribution and G-Brownian
Motion.

\begin{def1}{(Complex G-normal distribution)}
A d-dimensional random variable $Z=(Z_1,\cdots,Z_d)$ on a complex
sublinear expectation space $\OHEC$ is called complex G-normal
distributed if
$$ aZ+b\hat{Z}\stackrel{d}{=}\sqrt{a^2+b^2}Z, for \ a,b\geq 0$$
where $\hat{Z}$ is an independent copy of $Z$.

\end{def1}

\begin{example}
Suppose $Z$ is a complex G-normal distributed variable. By the
definition of the complex G-normal distribution, $e^{i\theta}Z$ is
also normal distributed, which means a change of the argument of a
complex G-normal distributed variable is also G-normal distributed.
\end{example}

\begin{rem}
By the definition of identically distributed and lemma(\ref{indep}),
$Z=X+iY$ is complex G-normal distributed if and only if $(X,Y)$ is
G-normal distributed in $\OHE$
\end{rem}

\begin{rem}
If we define $\omega(t,z):=\EC[\varphi(z+\sqrt{t}Z)]$, where
$\varphi \in C_{b.lip}(\C^d),Z\in \HC^d$ and $Z$ is complex G-normal
distributed, we have
\begin{eqnarray*}
\omega(t+s,z)&=&\EC[\varphi(z+\sqrt{t+s}Z)]\\
             &=&\EC[\varphi(z+\sqrt{s}Z+\sqrt{t}\hat{Z})]\\
             &=&\EC[\EC[\varphi(z+\sqrt{s}c+\sqrt{t}Z]_{c=Z}]\\
             &=&\EC[\omega(t,z+\sqrt{s}Z)]
\end{eqnarray*}
If $\omega(t,z)$ is continuously differentiable on t, complex
differentiable on z, and has at most polynomially growth at
infinity, thanks to proposition (\ref{10}), we could get a $G_{\C}$
heat equation:
$$\partial_t\omega(t,z)-G_{\C}(D_2\omega(t,z))=0$$
where $G_{\C}(c)=\frac{1}{2}\EC[cZ^2]$. Here we should notice that
$G$, which has 4 parameters, could determine $G_{\C}$, a 3-parameter
function and the converse is not usually true. However, there are
conditions under which G and $G_{\C}$ are mutually determined, such
as $Z=X+i\hat{X}$, where $\hat{X}$ is the independent copy of the
$\tilde{G}$ normal distributed variable $X$.

\end{rem}

Now we can turn to stochastic analysis. Let's start with complex
G-Brownian Motion.

%

\begin{def1}{complex G-Brownian Motion}
A d-dimensional process $(B_t)_{t\geq 0}$ is called a complex
G-Brownian Motion if the following are satisfied:\\
$(i)$ $B_0(\omega)=0$\\
$(ii)$ For each $s,t\geq0$, the increment $B_{t+s}-B_t$ is complex
G-normal distributed and is independent from
$(B_{t_1},B_{t_2},\cdots,B_{t_n})$, for any $n\in\mathbb{N}$ and
$0\leq t_1\leq t_2\leq \cdots \leq t_n\leq t$.

\end{def1}

%

\section{Stochastic Integral and Related Stochastic Calculus}


\begin{def1}
For each $T\in [0,\infty)$, we set Lipschitz cylinder functions as:
$$
Lip_{\C}(\Omega_T):=\{ \varphi(B_{t_1\wedge T},B_{t_2\wedge
T},\cdots,B_{t_n\wedge T}):n\in \mathbb{N},\ t_1,\cdots,t_n\in
[0,\infty),\ \varphi\in C_{l.lip}(\C^{d\times n}) \}
$$
where $(B_t)_{t\geq 0}$ is a complex G-Brownian Motion. It is clear
that $Lip_{\C}(\Omega_t)\subseteq Lip_{\C}(\Omega_T)$ for $t\leq T$.
Also, we let
$Lip_{\C}(\Omega):=\bigcup_{n=1}^{\infty}Lip_{\C}(\Omega_n)$. Then
we have a complex sublinear expectation called complex G-expectation
as:
$$
\EC[\cdot]\ :\ Lip_{\C}(\Omega)\longmapsto \C.
$$

\end{def1}

\begin{rem}
The corresponding conditional expectation also explains itself as
the real case.
\end{rem}

To complete $Lip_{\C}(\Omega_T)$, we need a small lemma to define a
norm on it.

\begin{lemma}\label{26}
$\OHE$ is a real sublinear space. Then $\| \cdot
\|:=(\E|\cdot|^p)^{\frac1p}$,$p>1$, defines a norm on $\OHEC$.

\end{lemma}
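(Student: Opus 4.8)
The plan is to verify directly the three defining properties of a (semi)norm on $\OHEC$ — nonnegativity together with definiteness, absolute homogeneity over $\C$, and the triangle inequality — where only the last carries any real content and even that is outsourced to the corresponding real-valued fact. Throughout, write a generic $Z\in\HC$ in its unique form $Z=X+iY$ with $X,Y\in\mathcal{H}$, and set $|Z|:=\sqrt{X^{2}+Y^{2}}$; this is again a nonnegative element of $\mathcal{H}$ because $\mathcal{H}$ is stable under composition with Lipschitz functions (as holds for $Lip(\Omega)$ and for each $L_{G}^{p}(\Omega_{T})$), so $|Z|^{p}\in\mathcal{H}$ and $\|Z\|=(\E[|Z|^{p}])^{1/p}$ is well defined. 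Nonnegativity is then immediate from monotonicity of $\E$ and $\E[0]=0$. For definiteness, use the pointwise bounds $|X|\le|Z|$ and $|Y|\le|Z|$, hence $|X|^{p}\le|Z|^{p}$ and $|Y|^{p}\le|Z|^{p}$ since $t\mapsto t^{p}$ is nondecreasing on $[0,\infty)$; monotonicity of $\E$ gives $\E[|X|^{p}]\vee\E[|Y|^{p}]\le\E[|Z|^{p}]$, so $\|Z\|=0$ forces $\E[|X|^{p}]=\E[|Y|^{p}]=0$, and the statement reduces to the (quasi-sure) definiteness already available for the norm $(\E[|\cdot|^{p}])^{1/p}$ on the real space $\OHE$.

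For absolute homogeneity, fix $\lambda\in\C$. The key point is the pointwise identity of \emph{real} random variables $|\lambda Z|=|\lambda|\,|Z|$, which is nothing but $|\lambda w|=|\lambda|\,|w|$ applied with $w=Z(\omega)$ for each $\omega\in\Omega$. Raising to the $p$-th power yields $|\lambda Z|^{p}=|\lambda|^{p}\,|Z|^{p}$, and since $|\lambda|^{p}\ge0$ the positive homogeneity of $\E$ gives $\E[|\lambda Z|^{p}]=|\lambda|^{p}\,\E[|Z|^{p}]$, i.e. $\|\lambda Z\|=|\lambda|\,\|Z\|$. This is precisely the place where working with the complex modulus, rather than comparing complex numbers coordinatewise, is essential: it upgrades the merely \emph{positive} homogeneity of $\E$ to genuine \emph{absolute} homogeneity over $\C$.

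For the triangle inequality, start from the pointwise triangle inequality for the modulus on $\C$, $|Z_{1}+Z_{2}|\le|Z_{1}|+|Z_{2}|$, whence $|Z_{1}+Z_{2}|^{p}\le(|Z_{1}|+|Z_{2}|)^{p}$; monotonicity of $\E$ gives $\E[|Z_{1}+Z_{2}|^{p}]\le\E[(|Z_{1}|+|Z_{2}|)^{p}]$. It then remains to invoke Minkowski's inequality for the sublinear expectation $\E$, valid for nonnegative elements of $\mathcal{H}$ and $p>1$ (a consequence of Hölder's inequality for sublinear expectations, see \cite{P10}): $(\E[(|Z_{1}|+|Z_{2}|)^{p}])^{1/p}\le(\E[|Z_{1}|^{p}])^{1/p}+(\E[|Z_{2}|^{p}])^{1/p}$. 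Chaining the two estimates gives $\|Z_{1}+Z_{2}\|\le\|Z_{1}\|+\|Z_{2}\|$. The only genuine obstacle is this last step, and it is entirely reduced to the real-valued Minkowski inequality; the remaining work consists of pointwise facts about $|\cdot|$ on $\C$ and the elementary properties of $\E$. One should also record the implicit standing assumption that $\HC$ is stable under $Z\mapsto|Z|$ and $Z\mapsto|Z|^{p}$, which is satisfied by all the concrete function spaces used in this paper.
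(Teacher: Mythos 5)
Your proof is correct and is precisely the classical argument the paper alludes to but omits (its entire proof reads ``The proof is similar as the classical one, so we omit it''): pointwise properties of the complex modulus combined with monotonicity and positive homogeneity of $\E$, plus Minkowski's inequality for sublinear expectations from \cite{P10}. Your explicit flagging of the required closure of $\mathcal{H}$ under $(X,Y)\mapsto\sqrt{X^{2}+Y^{2}}$ is a worthwhile addition, since the paper's standing assumptions only guarantee $|X|\in\mathcal{H}$ for real $X\in\mathcal{H}$.
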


\begin{proof}
%
The proof is similar as the classical one, so we omit it.

\end{proof}

\begin{rem}
We define a norm $\E[|\cdot|^p]^{\frac1p}$ on $Lip_{\C}(\Omega)$ and
denote $L_G^p(\Omega)_{\C}$ as the completion of linear space
$Lip_{\C}(\Omega)$. Also, $L_G^p(\Omega_T)_{\C}$ explains itself.
Then we have the following.

\end{rem}

\begin{prop}
The completion $L_G^p(\Omega_T)_{\C}$ of $Lip_{\C}(\Omega_T)$ under
norm $\E[|\cdot|^p]^{\frac1p}$ have the following expression:
$$
L_G^p(\Omega_T)_{\C}=\{\xi_1+i\xi_2|\ \xi_1,\xi_2\in
L_G^p(\Omega_T)\}
$$
under norm $\E[|\cdot|^p]^{\frac1p}$, where $L_G^p(\Omega_T)$ is the
completion of
$$
Lip(\Omega_T):=\{\varphi(B_{t_1\wedge T}^{(1)},B_{t_2\wedge
T}^{(1)},\cdots,B_{t_n\wedge T}^{(1)},B_{t_1\wedge
T}^{(2)},\cdots,B_{t_n\wedge T}^{(2)})|n \in \mathbb{N},\varphi \in
C_{l.lip}(\R^{d\times 2n})\}
$$
and $B_t=B_t^{(1)}+iB_t^{(2)}$ is the complex G-Brownian Motion.

\end{prop}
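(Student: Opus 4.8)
The plan is to prove the stated equality first at the level of Lipschitz cylinder functions and then transport it through the completion, the bridge being the elementary fact that the norm $\|Z\|_p:=(\E[|Z|^p])^{1/p}$ controls the real and imaginary parts of $Z$ and conversely. Concretely, for real $X,Y\in L^p_G(\Omega_T)$ the pointwise bounds $|X|\vee|Y|\le|X+iY|\le|X|+|Y|$ together with monotonicity of $\E$ and the Minkowski inequality for $\|\cdot\|_p$ (part of Lemma~\ref{26}) give
$$\|X\|_p\vee\|Y\|_p\ \le\ \|X+iY\|_p\ \le\ \|X\|_p+\|Y\|_p .$$
Hence a sequence $Z^n=\xi_1^n+i\xi_2^n$ is $\|\cdot\|_p$-Cauchy in $\HC^p$ if and only if both $\xi_1^n$ and $\xi_2^n$ are $\|\cdot\|_p$-Cauchy in $L^p_G(\Omega_T)$, and $Z^n\to\xi_1+i\xi_2$ iff $\xi_k^n\to\xi_k$ for $k=1,2$; in particular $\{\xi_1+i\xi_2\mid\xi_1,\xi_2\in L^p_G(\Omega_T)\}$ is itself complete, so it is legitimate to speak of ``the'' completion once density has been checked.

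Next I would establish the set identity $Lip_{\C}(\Omega_T)=\{\zeta_1+i\zeta_2\mid\zeta_1,\zeta_2\in Lip(\Omega_T)\}$ before any completion is taken. For $\subseteq$: given $\varphi\in C_{l.lip}(\C^{d\times n})$, write $\varphi=\varphi_1+i\varphi_2$; by the multidimensional version of the Remark identifying $C_{l.lip}(\C)$ with $C_{l.lip}(\R^2)$, both $\varphi_k$ lie in $C_{l.lip}(\R^{d\times 2n})$, and substituting $B_{t_j\wedge T}=B^{(1)}_{t_j\wedge T}+iB^{(2)}_{t_j\wedge T}$ shows $\varphi(B_{t_1\wedge T},\cdots,B_{t_n\wedge T})=\varphi_1(B^{(1)}_\cdot,B^{(2)}_\cdot)+i\,\varphi_2(B^{(1)}_\cdot,B^{(2)}_\cdot)$ with both summands in $Lip(\Omega_T)$. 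For $\supseteq$: a generator $\psi(B^{(1)}_{s_1\wedge T},\cdots,B^{(2)}_{s_m\wedge T})$ of $Lip(\Omega_T)$ equals $\widetilde\psi(B_{s_1\wedge T},\cdots,B_{s_m\wedge T})$ with $\widetilde\psi(z_1,\cdots,z_m):=\psi(\mathrm{Re}\,z_1,\cdots,\mathrm{Re}\,z_m,\mathrm{Im}\,z_1,\cdots,\mathrm{Im}\,z_m)$, which is a real-valued, hence $\C$-valued, element of $C_{l.lip}(\C^{d\times m})$ because $\mathrm{Re}$ and $\mathrm{Im}$ are $1$-Lipschitz; a general $\zeta_1+i\zeta_2$ is handled by first passing to a common set of time points (a cylinder function is unchanged by adjoining dummy arguments) and then assembling $\zeta_1$ and $\zeta_2$ into a single $C_{l.lip}(\C^{d\times m})$ map. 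Thus the two linear subspaces of $\HC$ coincide.

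Finally I would combine the two. If $Z=\xi_1+i\xi_2$ with $\xi_k\in L^p_G(\Omega_T)$, choose $\zeta_k^n\in Lip(\Omega_T)$ with $\|\zeta_k^n-\xi_k\|_p\to0$; then $\zeta_1^n+i\zeta_2^n\in Lip_{\C}(\Omega_T)$ by the set identity and $\|(\zeta_1^n+i\zeta_2^n)-Z\|_p\le\|\zeta_1^n-\xi_1\|_p+\|\zeta_2^n-\xi_2\|_p\to0$ by the right-hand norm bound, so $Z\in L^p_G(\Omega_T)_{\C}$. Conversely, if $Z\in L^p_G(\Omega_T)_{\C}$, pick $W^n\in Lip_{\C}(\Omega_T)$ with $W^n\to Z$; writing $W^n=\zeta_1^n+i\zeta_2^n$ with $\zeta_k^n\in Lip(\Omega_T)$, the left-hand norm bound forces $\zeta_1^n,\zeta_2^n$ to be $\|\cdot\|_p$-Cauchy, hence convergent to some $\xi_1,\xi_2\in L^p_G(\Omega_T)$, whence $W^n\to\xi_1+i\xi_2$ and $Z=\xi_1+i\xi_2$ by uniqueness of limits in $\HC^p$. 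Since the norm on both sides is literally $(\E[|\cdot|^p])^{1/p}$, nothing further about norms is needed. The one point deserving a little care is the $\supseteq$ inclusion above, namely that a real-valued function assembled from $\mathrm{Re}$ and $\mathrm{Im}$ projections is a legitimate member of $C_{l.lip}(\C^{d\times m})$ and that finitely many time points may always be merged into one cylinder; everything else is the routine bookkeeping furnished by the two-sided norm bound.
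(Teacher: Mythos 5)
Your proposal is correct and follows essentially the same route as the paper: decompose complex Lipschitz cylinder functions into real and imaginary parts (and conversely assemble real cylinder functions into complex ones), then transport the identity through the completion using density of $Lip(\Omega_T)$ in $L_G^p(\Omega_T)$ and completeness on both sides. The only difference is one of explicitness — you spell out the two-sided norm bound $\|X\|_p\vee\|Y\|_p\le\|X+iY\|_p\le\|X\|_p+\|Y\|_p$ and the resulting Cauchy-sequence bookkeeping, which the paper leaves implicit by appealing to its earlier remark that $\{X+iY\mid X,Y\in L_G^p(\Omega_T)\}$ is a Banach space.
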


\begin{proof}
For any $\xi_1,\xi_2 \in Lip(\Omega_T)$, without loss of generality,
we can suppose they have the same form as
$\xi_i=\varphi_i(B_{t_1\wedge T}^{(1)},B_{t_2\wedge
T}^{(1)},\cdots,B_{t_n\wedge T}^{(1)},B_{t_1\wedge
T}^{(2)},\cdots,B_{t_n\wedge T}^{(2)})$, where $\varphi_i \in
C_{l.lip}(\R^{d\times 2n}),i=1,2$. Then we can take
$\varphi=\varphi_1+i\varphi_2$ and conclude that $\xi_1+i\xi_2 \in
L_G^p(\Omega_T)_{\C}$. The forward inclusion follows from facts
$(i)$ $Lip(\Omega_T)$ is dense in $L_G^p(\Omega_T)$ and $(ii)$
$L_G^p(\Omega_T)_{\C}$ is complete.\\

For the converse inclusion, notice that for any $\varphi \in
C_{l.lip}(\C^{d\times n})$, $\varphi$ has a unique decomposition $
\varphi=\varphi_1+i\varphi_2 $ where $\varphi_1,\varphi_2 \in
C_{l.lip}(\R^{2d \times n})$. It means $Lip_{\C}(\Omega_T) \subseteq
\{\xi_1+i\xi_2|\xi_1,\xi_2 \in L_G^p(\Omega_T)\}$ by the
completeness of $ \{\xi_1+i\xi_2|\xi_1,\xi_2 \in L_G^p(\Omega_T)\}.$
\end{proof}

Now we can define $it\hat{o}'s$ integral by starting from simple
processes as the following: for a partition
$\Pi=\{t_0,t_1,\cdots,t_N\}$ of $[0,T]$, we define
$$
\eta_t(\omega):=\Sigma_{k=0}^{N-1}\xi_k(\omega)I_{[t_k,t_{k+1})}(t)
$$
where $\xi_k \in L_G^p(\Omega_{t_k})_{\C}$, $k=0,1,2,\cdots,N-1$ are
given. We denote these processes by $M_G^{p,0}(0,T)_{\C}$.

\begin{def1}
For any $\eta \in M_G^{p,0}(0,T)_{\C}$, with the form
$\eta_t(\omega)=\Sigma_{k=0}^{N-1}\xi_k(\omega)I_{[t_k,t_{k+1})}(t)$,
the related Bochner integral on $[0,T]$ is defined as natural as:
$$
\int_0^T
\eta_t(\omega)dt:=\sum_{k=0}^{N-1}\xi_k(\omega)(t_{k+1}-t_k)
$$

\end{def1}

Then we can complete the space $M_G^{p,0}(0,T)_{\C}$ under norm\\
$\|\cdot\|_M^p:=\{\E[\int_0^T|\cdot|^p dt]\}^{\frac1p}$ and denote
the completion as $M_G^p(0,T)_{\C}$. Similar as
$L_G^p(\Omega_T)_{\C}$, $M_G^p(0,T)_{\C}$ has the expression:
$$
M_G^p(0,T)_{\C}=\{ \eta_t^{(1)}+i\eta_t^{(2)}\mid
\eta_t^{(1)},\eta_t^{(2)} \in M_G^p(0,T) \}
$$
where $M_G^p(0,T)$ is the corresponding space in real case.

\begin{def1}
For a $\eta \in M_G^{2,0}(0,T)_{\C}$ of the form
$$
\eta_t(\omega)=\Sigma_{k=0}^{N-1}\xi_k(\omega)I_{[t_k,t_k+1)}(t)
$$
We define the $it\hat{o}$ integral as the following operator
$I(\cdot) \ : \ M_G^{2,0}(0,T)_{\C} \mapsto L_G^2(\Omega_T)_{\C}$ as
following:
$$
I(\eta)=\int_0^{T} \eta_t dB_t:=\sum_{j=0}^{N-1}
\xi_j(B_{t_{j+1}}-B_{t_j}),
$$
where $B_t=B_t^{(1)}+iB_t^{(2)}$ is a Brownian Motion.
\end{def1}

\begin{prop}
For the mapping $I:M_G^{2,0}(0,T)_{\C} \mapsto
L_G^2(\Omega_T)_{\C}$, we have:

\begin{equation}\label{2}
 \EC[\int_0^T \eta_t dB_t]    = 0,
\end{equation}
\begin{equation}\label{3}
 \E[|\int_0^T \eta_t dB_t|^2] \leq
16K\E[\int_0^T|\eta_t|^2 dt],
\end{equation}

where $K=(\bar{\sigma}_1^2+\bar{\sigma}_2^2)$,
$\bar{\sigma}_1^2:=\E[(B_1^{(1)})^2]$,
$\bar{\sigma}_2^2:=\E[(B_1^{(2)})^2]$.

\end{prop}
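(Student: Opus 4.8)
The plan is to reduce both assertions to the corresponding facts about the real-valued $G$-It\^o integral stated in the Lemma of Section 2.2, using the decomposition $\eta = \eta^{(1)} + i\eta^{(2)}$ with $\eta^{(j)} \in M_G^{2,0}(0,T)$ and $B_t = B_t^{(1)} + iB_t^{(2)}$. Writing out the product defining $I(\eta)$ and separating real and imaginary parts, one gets
\begin{eqnarray*}
\int_0^T \eta_t\, dB_t &=& \Bigl(\int_0^T \eta^{(1)}_t\, dB^{(1)}_t - \int_0^T \eta^{(2)}_t\, dB^{(2)}_t\Bigr) \\
&& {}+ i\Bigl(\int_0^T \eta^{(1)}_t\, dB^{(2)}_t + \int_0^T \eta^{(2)}_t\, dB^{(1)}_t\Bigr),
\end{eqnarray*}
where each of the four terms is an ordinary one-dimensional $G$-It\^o integral of a simple process against $B^{(1)}$ or $B^{(2)}$ (these are themselves one-dimensional $G_a$-Brownian motions by the multi-dimensional discussion in Section 2.2, so the real theory applies). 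Since $\EC$ is defined componentwise as $\E$ on real and imaginary parts, equation \eqref{2} follows immediately: by $(1)$ of the real Lemma each of the four real It\^o integrals has vanishing $\E$, so both the real and imaginary part of $\EC[\int_0^T \eta_t\, dB_t]$ vanish. This first step I expect to be entirely routine.

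For the inequality \eqref{3} I would first bound $|\int_0^T \eta_t\, dB_t|^2$ by the sum of the squares of its real and imaginary parts, then apply the elementary inequality $(a+b)^2 \le 2a^2 + 2b^2$ to split each of the two parts into a contribution from $\int \eta^{(1)} dB^{(\cdot)}$ and one from $\int \eta^{(2)} dB^{(\cdot)}$; this produces four terms of the form $2\,|\int_0^T \eta^{(j)}_t\, dB^{(k)}_t|^2$. Taking $\E$ and using subadditivity of $\E$ together with estimate $(2)$ of the real Lemma applied to each one-dimensional integral (with constant $\bar\sigma_k^2 = \E[(B_1^{(k)})^2]$ for the integrator $B^{(k)}$), each term is dominated by $2\bar\sigma_k^2\,\E[\int_0^T |\eta^{(j)}_t|^2\, dt]$. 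Finally, bounding $|\eta^{(j)}_t|^2 \le |\eta_t|^2$ and $\bar\sigma_k^2 \le K = \bar\sigma_1^2 + \bar\sigma_2^2$, and collecting the at most $8$ resulting terms (four from the real part, four from the imaginary, each carrying a factor $2$) gives the constant $16K$ in \eqref{3}.

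The one point that needs a little care — and the place where I'd expect a referee to push back — is the application of the real estimate $(2)$ to the "cross" integrals $\int_0^T \eta^{(1)}_t\, dB^{(2)}_t$ and $\int_0^T \eta^{(2)}_t\, dB^{(1)}_t$: one must check that $\eta^{(1)}$, which a priori lies in $M_G^{2,0}(0,T)$ built over the full complex filtration $\Omega_{t_k}$, is indeed an admissible integrand for the one-dimensional $G$-It\^o integral against $B^{(2)}$, i.e. that $\eta^{(1)}_{t_k}$ is measurable with respect to (and square-integrable in) $L_G^2(\Omega_{t_k})$ in the real sense. This follows from the Proposition characterizing $L_G^p(\Omega_T)_{\C}$ as $\{\xi_1 + i\xi_2 : \xi_i \in L_G^p(\Omega_T)\}$ with the real $L_G^p(\Omega_T)$ built over the joint path space of $B^{(1)}$ and $B^{(2)}$, so each simple-process coefficient automatically has real and imaginary parts in the correct real space. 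Once that identification is invoked the rest is bookkeeping, and the constant $16K$ is not claimed to be optimal, so no sharpness argument is needed.
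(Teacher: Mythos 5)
Your proposal is correct and follows essentially the same route as the paper: both decompose $\eta=\eta^{(1)}+i\eta^{(2)}$ and $B=B^{(1)}+iB^{(2)}$ so that $\int_0^T\eta_t\,dB_t$ becomes a combination of the four real integrals $\int_0^T\eta^{(j)}_t\,dB^{(k)}_t$, and both prove (\ref{2}) by observing that each of these has vanishing expectation. One point you should make explicit for (\ref{2}): under a sublinear expectation, ``each of the four integrals has zero $\E$'' does not by itself give ``their combination has zero $\E$''; you need additivity over symmetric random variables (each real $G$-It\^o integral satisfies $\E[X]=-\E[-X]=0$), which is exactly why the paper invokes Proposition (\ref{10}) at this step. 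For (\ref{3}) your route is genuinely shorter than the paper's: you quote the one-dimensional estimate $\E[|\int_0^T\zeta_t\,dB^{(k)}_t|^2]\le\bar{\sigma}_k^2\,\E[\int_0^T|\zeta_t|^2\,dt]$ four times and collect constants --- correctly counted this gives four terms, each carrying a factor $2$ from $(a\pm b)^2\le 2a^2+2b^2$ and a factor $\bar{\sigma}_k^2\le K$, hence the bound $8K$, which is stronger than the stated $16K$ (your tally of ``eight terms'' is off, but harmlessly so). The paper instead re-derives the estimate from scratch at the level of the simple sums, showing the cross terms $\E[\xi_j^{(i)}\triangle B_j^{(k)}\xi_l^{(m)}\triangle B_l^{(n)}]=0$ vanish by conditioning on $\Omega_{t_l}$ and then telescoping the conditional bound $\E[(\triangle B_j^{(k)})^2\mid\Omega_{t_j}]=\bar{\sigma}_k^2(t_{j+1}-t_j)$ through the partition; your black-boxing of the real lemma buys brevity and a better constant, while the paper's version is self-contained. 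Your appeal to the real lemma for the cross integrals is legitimate for the reason you give (the one-dimensional estimate for a component of a multi-dimensional $G$-Brownian motion admits integrands adapted to the full filtration), so the only substantive addition needed is the symmetry/additivity justification in the first step.
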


\begin{proof}

We denote
$$
\triangle B_{j}=B_{t_{j+1}}-B_{t_j},
$$
$$
\triangle B_{j}^{(i)}=B_{t_{j+1}}^{(i)}-B_{t_j}^{(i)},i=1,2
$$
For $(\ref{2})$, we notice $ \EC[\int_0^T \eta_t dB_t] =
\EC[\sum_{j=0}^{N-1}\xi_j \triangle B_{j}]$, and

\begin{eqnarray*}
\EC[\xi_j \triangle B_{j}] &=&
\EC[(\xi_j^{(1)}+i\xi_j^{(2)})(\triangle B_{j}^{(1)}+i\triangle
B_{j}^{(2)})]\\
                             &=& \E[\xi_j^{(1)}\triangle B_{j}^{(1)}-\xi_j^{(2)}\triangle
B_{j}^{(2)}]+i\E[\xi_j^{(1)} \triangle B_{j}^{(2)} + \xi_j^{(2)}
\triangle B_{j}^{(1)}].
\end{eqnarray*}
Also, we have $\E[\xi_j^{(i)}\triangle B_{j}^{(k)}]=0,i,k=1,2.$ By
proposition (\ref{10}), these equations imply the conclusion.

For $(\ref{3})$, firstly notice
\begin{eqnarray*}
\E[|\int_0^T\eta_t dB_t|^2]
&=&
\E[|\sum_{j=0}^{N-1}[(\xi_j^{(1)}\triangle
B_j^{(1)}-\xi_j^{(2)}\triangle B_j^{(2)})+i(\xi_j^{(1)}\triangle
B_j^{(2)} +\xi_j^{(2)}\triangle B_j^{(1)})]|^2]\\
&\leq &
\E[(\sum_{j=0}^{N-1}(\xi_j^{(1)}\triangle
B_j^{(1)}-\xi_j^{(2)}\triangle
B_j^{(2)}))^2]+\E[(\sum_{j=0}^{N-1}(\xi_j^{(1)}\triangle B_j^{(2)}
+\xi_j^{(2)}\triangle B_j^{(1)}))^2]\\
&\leq & 2[\E(\sum_{j=0}^{N-1}(\xi_j^{(1)}\triangle
B_j^{(1)}))^2+\E(\sum_{j=0}^{N-1}(\xi_j^{(2)}\triangle
B_j^{(2)}))^2+\E(\sum_{j=0}^{N-1}(\xi_j^{(1)}\triangle
B_j^{(2)}))^2\\
&+&
\E(\sum_{j=0}^{N-1}(\xi_j^{(2)}\triangle B_j^{(1)}))^2]
\end{eqnarray*}

Also, we have
\begin{eqnarray*}
\E[\xi_j^{(i)}\triangle B_j^{(k)} \xi_{l}^{(m)}\triangle
B_{l}^{(n)}] &=& \E[\xi_j^{(i)}\triangle B_j^{(k)}
\xi_{l}^{(m)}\E[\triangle B_{l}^{(n)}|\Omega_{t_{l}}]]\\
&=& 0,
\end{eqnarray*}
where $l\geq j,and i,k,m,n=1,2$,\\
and
$$
\int_0^T|\eta_t|^2
dt=\sum_{j=0}^{N-1}((\xi_j^{(1)})^2+(\xi_j^{(2)})^2)(t_{j+1}-t_{j}).
$$

We only need to show
$$
\E[[\sum_{j=0}^{N-1}(\xi_j^{(1)}\triangle B_j^{(1)})]^2]\leq
2K\E[\sum_{j=0}^{N-1}((\xi_j^{(1)})^2+(\xi_j^{(2)})^2)(t_{j+1}-t_{j})],
$$
and the others are similar.\\
Notice that
\begin{eqnarray*}
\E[(\xi_j^{(1)})^2(\triangle
B_j^{(1)})^2+(\xi_{j+1}^{(1)})^2(\triangle B_{j+1}^{(1)})^2] &=&
\E[(\xi_j^{(1)})^2(\triangle B_j^{(1)})^2 +
(\xi_{j+1}^{(1)})^2\E[(\triangle B_{j+1}^{(1)})^2|\Omega_{t_j+1}]]\\
&=& \E[(\xi_j^{(1)})^2(\triangle B_j^{(1)})^2 + \bar{\sigma}_1^2
(\xi_{j+1}^{(1)})^2 (t_{j+2}-t_{j+1})]\\
&=& \E[\E[(\xi_j^{(1)})^2(\triangle B_j^{(1)})^2 + \bar{\sigma}_1^2
(\xi_{j+1}^{(1)})^2
(t_{j+2}-t_{j+1})]|\Omega_{t_j}]\\
&\leq& \E[\E[(\xi_j^{(1)})^2(\triangle B_j^{(1)})^2|\Omega_{t_j}] +
\E[ \bar{\sigma}_1^2 (\xi_{j+1}^{(1)})^2
(t_{j+2}-t_{j+1})|\Omega_{t_j}]]\\
&=& \E[(\xi_j^{(1)})^2 \bar{\sigma}_1^2 (t_{j+1}-t_{j})+ \E[
\bar{\sigma}_1^2 (\xi_{j+1}^{(1)})^2
(t_{j+2}-t_{j+1})|\Omega_{t_j}]]\\
&=& \E[\bar{\sigma}_1^2 (\xi_j^{(1)})^2
(t_{j+1}-t_{j})+\bar{\sigma}_1^2 (\xi_{j+1}^{(1)})^2
(t_{j+2}-t_{j+1})].
\end{eqnarray*}
Then we have
\begin{eqnarray*}
\E[[\sum_{j=0}^{N-1}(\xi_j^{(1)}\triangle B_j^{(1)})]^2] &\leq&
2\E[\sum_{j=0}^{N-1}(\xi_j^{(1)})^2(\triangle
B_j^{(1)})^2]+2\E[\sum_{j=0}^{N-1}(\xi_j^{(2)})^2(\triangle
B_j^{(2)})^2]\\
                                                         &\leq&
2\bar{\sigma}_1^2\E[\sum_{j=0}^{N-1}[((\xi_j^{(1)})^2+(\xi_j^{(2)})^2)(t_{j+1}-t_{j})]]\\
&+&
2\bar{\sigma}_2^2\E[\sum_{j=0}^{N-1}[((\xi_j^{(1)})^2+(\xi_j^{(2)})^2)(t_{j+1}-t_{j})]]\\
                                                         &\leq&
2K\E[\sum_{j=0}^{N-1}[((\xi_j^{(1)})^2+(\xi_j^{(2)})^2)(t_{j+1}-t_{j})]]
\end{eqnarray*}

\end{proof}

So we can extend the stochastic integral mapping I from $
M_G^{2,0}(0,T)_{\C}$ to $ M_G^{2}(0,T)_{\C}$.
\begin{def1}
For a $\eta \in  M_G^{2}(0,T)_{\C}$, the stochastic integral is
defined as
\begin{eqnarray*}
\int_0^T\eta_t dB_t&:=& I(\eta),\\
 \int_s^t \eta_u dB_u&:=& \int_0^T I_{[s,t]}(u)
\eta_u dB_u,
\end{eqnarray*}
where I is the extension of the former integral mapping from $
M_G^{2}(0,T)_{\C}$ to $L_G^2(\Omega_T)_{\C}$ by Hahn-Banach
extension theorem.

\end{def1}

Naturally, we have the following basic properties. The proof is
trivial.

\begin{prop}
For $\eta, \zeta \in M_G^{2}(0,T)_{\C}$, and $0\leq s\leq r \leq
t\leq T$, we have:\\
$(i)$ $\int_s^t \eta_u dB_u=\int_s^r \eta_u dB_u+\int_r^t \eta_u
dB_u$.\\
$(ii)$ $\int_s^t(\alpha \eta_u+\zeta_u)dB_u=\alpha \int_s^t \eta_u
dB_u+\int_s^t \zeta_u dB_u$, where $\alpha$ is bounded in
$L_G^2(\Omega_s)_{\C}$.\\
$(iii)$ $\EC[Z+\int_r^T \eta_u dB_u|\Omega_s]=\EC[Z|\Omega_s]$, for
$Z \in L_G^2(\Omega_T)_{\C}$

\end{prop}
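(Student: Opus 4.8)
The plan is to reduce each of the three assertions to the corresponding real-valued facts already recorded in the preliminaries (Proposition on G-It\^o integrals, properties (i)--(v) of conditional $G$-expectation) via the decomposition $\eta = \eta^{(1)} + i\eta^{(2)}$, $B = B^{(1)} + iB^{(2)}$, together with the identification $M_G^2(0,T)_{\C} = \{\eta^{(1)} + i\eta^{(2)} : \eta^{(1)},\eta^{(2)} \in M_G^2(0,T)\}$ and $L_G^2(\Omega_T)_{\C} = \{\xi_1 + i\xi_2 : \xi_1,\xi_2 \in L_G^2(\Omega_T)\}$. Since all three statements are closed conditions that hold on the dense subspace $M_G^{2,0}(0,T)_{\C}$ of simple processes (where the integral is a finite sum) and the maps involved are continuous, it suffices by a standard density/continuity argument to verify them for simple $\eta$; I would first reduce to that case and then argue by approximation at the end.

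For $(i)$, I would fix $0 \le s \le r \le t \le T$, write $\eta I_{[s,t]} = \eta I_{[s,r]} + \eta I_{[r,t]}$ as an identity in $M_G^2(0,T)_{\C}$, and apply linearity of the extended operator $I$; this is immediate once linearity of $I$ on simple processes is noted, which follows from the defining sum $I(\eta) = \sum_j \xi_j(B_{t_{j+1}}-B_{t_j})$ after refining to a common partition. For $(ii)$, with $\alpha$ bounded in $L_G^2(\Omega_s)_{\C}$, I would split $\alpha = \alpha^{(1)} + i\alpha^{(2)}$ and the integrands likewise; since $\alpha$ is $\Omega_s$-measurable, the product $\alpha\eta$ stays in $M_G^2(0,T)_{\C}$ (the real components are products of bounded $L_G^2(\Omega_s)$ elements with $M_G^2(0,T)$ processes, hence again in $M_G^2(0,T)$), and on simple processes $I(\alpha\eta) = \sum_j \alpha\xi_j(B_{t_{j+1}}-B_{t_j}) = \alpha\sum_j \xi_j(B_{t_{j+1}}-B_{t_j}) = \alpha I(\eta)$ because $\alpha$ factors out of each summand; the general case follows by approximating $\eta$ by simple processes and using the bound \eqref{3} to pass to the limit (noting $|\alpha|$ is bounded).

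For $(iii)$, I would again decompose $Z = Z_1 + iZ_2$ and $\eta = \eta^{(1)} + i\eta^{(2)}$, so that $Z + \int_r^T \eta_u\,dB_u$ has real part $Z_1 + \int_r^T(\eta^{(1)}\,dB^{(1)} - \eta^{(2)}\,dB^{(2)})$ and imaginary part $Z_2 + \int_r^T(\eta^{(1)}\,dB^{(2)} + \eta^{(2)}\,dB^{(1)})$, where each term $\int_r^T \eta^{(k)}\,dB^{(m)}$ is an ordinary $G$-It\^o integral against a one-dimensional $G$-Brownian motion. Applying part $(iii)$ of the real-case Proposition on integrals of $G$-Brownian motion (which gives $\E[X + \int_r^T \eta_u\,dB_u \mid \Omega_s] = \E[X \mid \Omega_s]$) to each of these four scalar integrals separately yields $\E[\mathrm{Re}(Z + \int) \mid \Omega_s] = \E[Z_1 \mid \Omega_s]$ and similarly for the imaginary part; recombining via the definition $\EC[\cdot \mid \Omega_s] = \E[\mathrm{Re}(\cdot)\mid\Omega_s] + i\E[\mathrm{Im}(\cdot)\mid\Omega_s]$ gives exactly $\EC[Z \mid \Omega_s]$. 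The main obstacle is the bookkeeping in $(iii)$: one must be careful that the cross-terms $\int_r^T \eta^{(1)}\,dB^{(2)}$ and $\int_r^T \eta^{(2)}\,dB^{(1)}$ really are legitimate real $G$-It\^o integrals (i.e. that $\eta^{(1)},\eta^{(2)} \in M_G^2(0,T)$ with respect to the relevant one-dimensional $G$-Brownian motions $B^{(1)}$, $B^{(2)}$, each of which is a $G_a$-Brownian motion in the sense of the multi-dimensional discussion above), and that the integrability $Z \in L_G^2(\Omega_T)_{\C}$ transfers to $Z_1, Z_2 \in L_G^2(\Omega_T)$ so that the real-case result applies verbatim; once this is set up the computation is routine.
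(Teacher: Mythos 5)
Your argument is correct and is exactly the reduction the author has in mind: the paper itself gives no proof here (it states only ``The proof is trivial''), and the intended route is precisely yours --- verify the identities on simple processes, pass to the limit using the continuity bound, and for $(iii)$ split into real and imaginary parts and peel off the four scalar integrals one at a time using the real-case property $\E[X+\int_r^T\eta_u\,dB_u\,|\,\Omega_s]=\E[X\,|\,\Omega_s]$, which legitimately applies iteratively since each scalar integral is a symmetric $G$-martingale increment. Your explicit attention to the cross integrals against $B^{(1)}$ and $B^{(2)}$ as one-dimensional $G_a$-Brownian motions supplies the only detail that actually needs checking.
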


Now we turn to Quadratic Variation Process of complex G-Brownian
Motion.\\
For a complex G-B.M. $B_t$, notice
$$
B_t^2=\sum_{j=0}^{N-1}
2B_{t_j^{N}}(B_{t_{j+1}^{N}}-B_{t_j^{N}})+\sum_{j=0}^{N-1}(B_{t_{j+1}^{N}}-B_{t_j^{N}})^2
$$
for a partition $\Pi_N$ of $[0,t]$: $0=t_0\leq t_1\leq,\cdots, \leq
t_N=t$. As $||\Pi_N||\rightarrow 0$, by independence under complex
framework, we can show that
$$
\sum_{j=0}^{N-1}2B_{t_j^{N}}(B_{t_{j+1}^{N}}-B_{t_j^{N}})
\stackrel{L_G^2(\Omega)_{\C}}{\rightarrow} 2\int_0^t B_u dB_u,
$$
so by the completeness of $L_G^2(\Omega)_{\C}$,
$\sum_{j=0}^{N-1}(B_{t_{j+1}^{N}}-B_{t_j^{N}})^2$ must also converge
in $L_G^2(\Omega)_{\C}$.

\begin{def1}
By the argument above, we define
\begin{eqnarray*}
\langle B \rangle_t&:=&\lim_{||\Pi_N||\rightarrow
0}\sum_{j=0}^{N-1}(B_{t_{j+1}^{N}}-B_{t_j^{N}})^2\\
     &=& B_t^2-2\int_0^t B_r dB_r,
     \end{eqnarray*}
and call $\langle B \rangle$ the quadratic variation process of
complex G-Brownian Motion.

\end{def1}

\begin{rem}
Since $L_G^p(\Omega_T)_{\C}=\{\xi_1+i\xi_2|\ \xi_1,\xi_2\in
L_G^p(\Omega_T)\}$, we can similarly show that
\begin{eqnarray*}
\int_0^T \eta_t dB_t &=&
\int_0^T(\eta_t^{(1)}+i\eta_t^{(2)})d(B_t^{(1)}+iB_t^{(2)})\\
                     &=&
[\int_0^T \eta_t^{(1)} dB_t^{(1)}-\int_0^T \eta_t^{(2)} dB_t^{(2)}]
+ i[\int_0^T \eta_t^{(1)} dB_t^{(2)}+\int_0^T \eta_t^{(2)}
dB_t^{(1)}]
\end{eqnarray*}

Furthermore, an algebraic calculation tells:
\begin{eqnarray*}
\langle B \rangle_t &=&
(B_t^{(1)}+iB_t^{(2)})^2-2\int_0^t(B_s^{(1)}+iB_s^{(2)})d(B_s^{(1)}+iB_s^{(2)})\\
      &=&
(B_t^{(1)})^2-(B_t^{(2)})^2+2iB_t^{(1)}B_t^{(2)}\\
      &-&
 2[\int_0^t
B_s^{(1)} dB_s^{(1)}-\int_0^t B_s^{(2)} dB_s^{(2)}+i\int_0^t
B_s^{(1)} dB_s^{(2)}+
i\int_0^t B_s^{(2)} dB_s^{(1)}]\\
      &=&
[(B_t^{(1)})^2-2\int_0^t B_s^{(1)}
dB_s^{(1)}]-[(B_t^{(2)})^2-2\int_0^t B_s^{(2)} dB_s^{(2)}]\\
      &+&
2i[B_t^{(1)}B_t^{(2)}-\int_0^t B_s^{(1)} dB_s^{(2)}-\int_0^t
B_s^{(2)} dB_s^{(1)}]\\
      &=&
\langle B^{(1)} \rangle _t-\langle B^{(2)}\rangle _t+2i\langle
B^{(1)},B^{(2)}\rangle_t
\end{eqnarray*}

\end{rem}

Here are some basic properties of $\langle B \rangle$. For
simplicity, we denote
\begin{eqnarray*}
\overrightarrow{B_t}&=&(B^{(1)}_t,B^{(2)}_t)\\
&&\\
\overrightarrow{{\langle B \rangle}}_t&=& \left(\begin{array}{ccc}
\langle B^{(1)} \rangle _t & \langle B^{(1)},B^{(2)}\rangle_t\\
&&\\
\langle B^{(1)},B^{(2)}\rangle_t  & \langle B^{(2)} \rangle _t
\end{array}\right)
\end{eqnarray*}

\begin{rem}
Since $\overrightarrow{B_t}$ is a two dimensional G-Brownian Motion,
$\overrightarrow{{\langle B \rangle}}_t$ is maximal distributed, so
are $Re{\langle B \rangle}_t$ and $Im{\langle B \rangle}_t$. Then we
can similarly define maximal distribution under complex framework in
a trivial way. Then, of course, ${\langle B \rangle}_t$ is maximal
distributed.
\end{rem}

%

\begin{lemma}
For $s,t \geq 0$, ${\langle B \rangle}_{t+s}-{\langle B \rangle}_t$
is identically distributed with ${\langle B \rangle}_s$ and
independent from $\Omega_s$.
\end{lemma}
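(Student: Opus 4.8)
The plan is to reduce this complex statement to the corresponding real, two-dimensional one for the symmetric-matrix-valued process $\overrightarrow{\langle B\rangle}$, using the decomposition $\langle B\rangle_u=\langle B^{(1)}\rangle_u-\langle B^{(2)}\rangle_u+2i\langle B^{(1)},B^{(2)}\rangle_u$ established above together with the fact that $\overrightarrow{B}_u=(B^{(1)}_u,B^{(2)}_u)$ is a two-dimensional real G-Brownian motion.

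First I would record the increment structure on the real side. For any $a\in\R^2$ the process $\langle\overrightarrow{B}_\cdot,a\rangle$ is a one-dimensional $G_a$-Brownian motion, so by the one-dimensional properties of its quadratic variation recalled in Section 2.2, the increment $\langle B^a\rangle_{t+s}-\langle B^a\rangle_{t}$ over $[t,t+s]$ is, under $\E$, identically distributed with $\langle B^a\rangle_s$ and independent of $\Omega_t$; applying this to $a=e_1,e_2,e_1\pm e_2$ and invoking $\langle B^{(1)},B^{(2)}\rangle=\tfrac14(\langle B^{(1)}+B^{(2)}\rangle-\langle B^{(1)}-B^{(2)}\rangle)$ also covers the mutual variation term. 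The crucial point, however, is the \emph{joint} statement: the $\R^3$-valued increment
$$
\Delta:=\bigl(\langle B^{(1)}\rangle_{t+s}-\langle B^{(1)}\rangle_{t},\ \langle B^{(2)}\rangle_{t+s}-\langle B^{(2)}\rangle_{t},\ \langle B^{(1)},B^{(2)}\rangle_{t+s}-\langle B^{(1)},B^{(2)}\rangle_{t}\bigr)
$$
is identically distributed under $\E$ with $\bigl(\langle B^{(1)}\rangle_s,\langle B^{(2)}\rangle_s,\langle B^{(1)},B^{(2)}\rangle_s\bigr)$ and independent of $\Omega_t$. This is precisely the assertion that the quadratic variation matrix $\overrightarrow{\langle B\rangle}$ of the two-dimensional G-Brownian motion $\overrightarrow{B}$ has stationary, $\Omega_t$-independent increments; it is part of the multidimensional G-stochastic calculus of \cite{P10}, and a self-contained argument can be given by approximating $\overrightarrow{\langle B\rangle}_{t+s}-\overrightarrow{\langle B\rangle}_t$ in $L_G^2$ by the Riemann sums $\sum_k(\overrightarrow{B}_{u_{k+1}}-\overrightarrow{B}_{u_k})(\overrightarrow{B}_{u_{k+1}}-\overrightarrow{B}_{u_k})^{\top}$ over partitions of $[t,t+s]$, for which stationarity and $\Omega_t$-independence of the vector increments $\overrightarrow{B}_{u_{k+1}}-\overrightarrow{B}_{u_k}$ are immediate from the definition of the G-Brownian motion.

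Granting this real statement, I would transfer it to the complex setting. Writing $\langle B\rangle_u=\psi\bigl(\langle B^{(1)}\rangle_u,\langle B^{(2)}\rangle_u,\langle B^{(1)},B^{(2)}\rangle_u\bigr)$ with the fixed map $\psi(a,b,c)=(a-b)+2ic$, any $\varphi\in C_{l.lip}(\C)$ decomposes as $\varphi=\varphi_1+i\varphi_2$ and then $(a,b,c)\mapsto\varphi_j(a-b,2c)$ lies in $C_{l.lip}(\R^3)$ for $j=1,2$; hence $\EC[\varphi(\langle B\rangle_{t+s}-\langle B\rangle_t)]$ is a fixed real-linear combination of terms $\E[\widetilde\varphi_j(\Delta)]$ with $\widetilde\varphi_j\in C_{l.lip}(\R^3)$, and by the identical-distribution half of the real statement these are unchanged upon replacing $\Delta$ by $\bigl(\langle B^{(1)}\rangle_s,\langle B^{(2)}\rangle_s,\langle B^{(1)},B^{(2)}\rangle_s\bigr)$. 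This gives $\langle B\rangle_{t+s}-\langle B\rangle_t\stackrel{d}{=}\langle B\rangle_s$. For independence, by Lemma \ref{indep} it suffices that the real pair $\bigl(\mathrm{Re}(\langle B\rangle_{t+s}-\langle B\rangle_t),\,\mathrm{Im}(\langle B\rangle_{t+s}-\langle B\rangle_t)\bigr)$, which is an affine image of $\Delta$, be independent under $\E$ of $\bigl(B^{(1)}_{u_1},\dots,B^{(1)}_{u_n},B^{(2)}_{u_1},\dots,B^{(2)}_{u_n}\bigr)$ for all $u_1,\dots,u_n\le t$; this is exactly the independence half of the real statement.

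The main obstacle is the joint stationarity/independence of $\Delta$ in the second paragraph: the one-dimensional results only control each linear combination $\langle B^a\rangle$ and the independence of its own increments, whereas what is needed is the joint law of the symmetric-matrix-valued process and joint independence of $\Delta$ from the entire family generating $\Omega_t$. I expect this to be settled either by direct appeal to the multidimensional G-theory of \cite{P10} or by the Riemann-sum approximation indicated above; the remaining passages are routine translations between $\E$ and $\EC$ via Lemma \ref{indep}.
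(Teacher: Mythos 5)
Your proposal is correct and follows essentially the same route as the paper, whose entire proof is the one-line observation that $\langle B\rangle_t=\langle B^{(1)}\rangle_t-\langle B^{(2)}\rangle_t+2i\langle B^{(1)},B^{(2)}\rangle_t$ reduces everything to the real two-dimensional quadratic-variation process together with the definition of independence. You have simply supplied the details the paper omits — in particular the correct identification of the \emph{joint} stationarity and $\Omega_t$-independence of the matrix-valued increment as the fact to be imported from the multidimensional real theory of \cite{P10}, and the sensible reading of the (apparently mistyped) ``independent from $\Omega_s$'' as independence from $\Omega_t$.
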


\begin{proof}
Notice $\langle B \rangle_t={\langle B^{(1)} \rangle}_{t}-{\langle
B^{(2)} \rangle}_{t}+2i\langle B^{(1)},B^{(2)}\rangle_{t}$, and by
the definition of independence we have the conclusion.
\end{proof}

Then we need to define integral with respect to $\langle B
\rangle_t$. Firstly, we define the map $J$ from
$M_G^{1,0}(0,T)_{\C}$ to $L_G^1(\Omega_T)_{\C}$:
$$
J(\eta)=\int_0^T \eta_td\langle B
\rangle_t:=\sum_{j=0}^{N-1}\xi_j(\langle B \rangle_{t_{j+1}}-\langle
B \rangle_{t_{j}}),
$$
where
$\eta_t(\omega)=\Sigma_{k=0}^{N-1}\xi_k(\omega)I_{[t_k,t_k+1)}(t)$
and then by the following lemma, we could have $\int_0^T \eta_t
d\langle B \rangle_t$ for $\eta_t \in M_G^{1}(0,T)_{\C}.$

\begin{lemma}
For $\eta_t=\Sigma_{k=0}^{N-1}\xi_k I_{[t_k,t_k+1)}(t) \in
M_G^{1}(0,T)_{\C},$
$$
\E[|J(\eta)|] \leq
4(\bar{\sigma}_1^2+\bar{\sigma}_3^2+2\bar{\sigma}_2^2)\E[\int_0^T
|\eta_t|dt],
$$
where $\bar{\sigma}_i^2=\sup_{\Lambda \in \Sigma}
|{\sigma}_i^2|,i=1,2,3,$ and $G(A)=\frac{1}{2}\sup_{\Lambda \in
\Sigma} (A,\Lambda)$, with $\Sigma$ a bounded closed convex subset
of $2\times2$ symmetric matrix and $\Lambda=\left(\begin{array}{cc}
\sigma_1^2&\sigma_2^2\\
\sigma_2^2&\sigma_3^2
\end{array}\right)$
\end{lemma}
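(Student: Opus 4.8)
The plan is to peel off real and imaginary parts and reduce the estimate to the one-dimensional lemma for the real quadratic variation recorded earlier (the inequality $\E[|\int_0^T\eta_t\,d\langle B\rangle_t|]\le\bar\sigma^2\,\E[\int_0^T|\eta_t|\,dt]$ for $\eta\in M_G^{1,0}$). Writing $\xi_j=\xi_j^{(1)}+i\xi_j^{(2)}$ and inserting the identity $\langle B\rangle_t=\langle B^{(1)}\rangle_t-\langle B^{(2)}\rangle_t+2i\langle B^{(1)},B^{(2)}\rangle_t$ into $J(\eta)=\sum_{j=0}^{N-1}\xi_j(\langle B\rangle_{t_{j+1}}-\langle B\rangle_{t_j})$ and multiplying out, one expresses $J(\eta)$ as a complex linear combination, with integer coefficients $1$ and $2$, of the six real simple integrals $\int_0^T\eta^{(a)}_t\,d\langle B^{(1)}\rangle_t$, $\int_0^T\eta^{(a)}_t\,d\langle B^{(2)}\rangle_t$ and $\int_0^T\eta^{(a)}_t\,d\langle B^{(1)},B^{(2)}\rangle_t$, $a=1,2$. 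By the triangle inequality for the modulus together with the subadditivity and monotonicity of $\E$, it then suffices to bound $\E[|\,\cdot\,|]$ for each of these six integrals.

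For the four integrals against $\langle B^{(1)}\rangle$ and $\langle B^{(2)}\rangle$ I would apply the one-dimensional estimate directly to the one-dimensional $G_{e_1}$- and $G_{e_2}$-Brownian motions $B^{(1)}=\langle\overrightarrow{B}_\cdot,e_1\rangle$ and $B^{(2)}=\langle\overrightarrow{B}_\cdot,e_2\rangle$, whose variance parameters are $2G(e_1e_1^{T})=\sup_{\Lambda\in\Sigma}\sigma_1^2\le\bar\sigma_1^2$ and $2G(e_2e_2^{T})=\sup_{\Lambda\in\Sigma}\sigma_3^2\le\bar\sigma_3^2$. For the two integrals against the mutual variation I would use the polarization identity $\langle B^{(1)},B^{(2)}\rangle_t=\tfrac14(\langle B^{(1)}+B^{(2)}\rangle_t-\langle B^{(1)}-B^{(2)}\rangle_t)$ to rewrite each as $\tfrac14$ times the difference of two integrals against genuine (nonnegative, nondecreasing) quadratic variation processes, to which the same one-dimensional lemma applies; the relevant parameters here are $2G((e_1\pm e_2)(e_1\pm e_2)^{T})=\sup_{\Lambda\in\Sigma}(\sigma_1^2\pm2\sigma_2^2+\sigma_3^2)\le\bar\sigma_1^2+2\bar\sigma_2^2+\bar\sigma_3^2$. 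Since $|\eta^{(a)}_t|\le|\eta_t|$ pointwise, combining the six bounds with the coefficients $1,1,2$ occurring in the real and in the imaginary part gives $\E[|J(\eta)|]\le 4(\bar\sigma_1^2+\bar\sigma_3^2+2\bar\sigma_2^2)\,\E[\int_0^T|\eta_t|\,dt]$, which is the assertion.

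These manipulations are all routine; the only point needing a word of care is that the one-dimensional lemma is stated for integrands in $M_G^{1,0}$, so one should first check that the coefficients $\xi_j^{(a)}$ of the simple process $\eta$ lie in $L_G^1(\Omega_{t_j})$ and that $\eta^{(a)}\in M_G^{1,0}(0,T)$, after which the estimate extends to all of $M_G^1(0,T)_\C$ by density — which is precisely the route by which $J$ is defined on $M_G^1(0,T)_\C$. I expect the main obstacle to be purely bookkeeping: tracking the constants through the polarization step, which is what produces the factor $4$ and the doubling of $\bar\sigma_2^2$, rather than anything conceptual.
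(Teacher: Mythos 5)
Your argument is correct and follows essentially the same route as the paper: decompose $\langle B\rangle$ into $\langle B^{(1)}\rangle-\langle B^{(2)}\rangle+2i\langle B^{(1)},B^{(2)}\rangle$, bound the real and imaginary parts componentwise by the one-dimensional real estimates, and recover the factor $4$ by combining the two components via $|\eta^{(a)}_t|\le|\eta_t|$ (the paper uses $|a|+|b|\le 2\sqrt{a^2+b^2}$ for the same bookkeeping). The only cosmetic difference is that you derive the mutual-variation bound by polarization through the one-dimensional lemma, whereas the paper bounds the increments $|\triangle\langle B^{(1)},B^{(2)}\rangle_j|\le\bar{\sigma}_2^2\,\triangle t_j$ directly.
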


\begin{proof}
We denote
\begin{eqnarray*}
\triangle t_j&=&t_{j+1}-t_j\\
\triangle \langle B \rangle_j &=&
\langle B\rangle_{t_{j+1}}-\langle B \rangle_{t_{j}},\\
\triangle \langle B^{(i)} \rangle_j &=& \langle B^{(i)}
\rangle_{t_{j+1}}-\langle B^{(i)} \rangle_{t_{j}},i=1,2,\\
\triangle \langle B^{(1)},B^{(2)} \rangle_j &=& \langle
B^{(1)},B^{(2)} \rangle_{t_{j+1}}-\langle B^{(1)},B^{(2)}
\rangle_{t_{j}}.
\end{eqnarray*}

We have $\E[|\xi_j \triangle \langle B\rangle_j|] \leq \E[|Re(\xi_j
\triangle \langle B\rangle_j)|]+\E[|Im(\xi_j \triangle \langle
B\rangle_j)|]$ and then
\begin{eqnarray*}
&& \E[|Re(\xi_j \triangle \langle B\rangle_j)|]\\ &=&
\E[|\xi_j^{(1)} \triangle \langle B^{(1)} \rangle_j - \xi_j^{(1)}
\triangle \langle B^{(2)} \rangle_j -2 \xi_j^{(2)} \triangle \langle
B^{(1)}, B^{(2)} \rangle_j|]\\ & \leq &
(\bar{\sigma}_1^2+\bar{\sigma}_3^2+2\bar{\sigma}_2^2)\E[\triangle
t_j (|\xi_j^{(1)}|+|\xi_j^{(2)}|)].
\end{eqnarray*}
The similar result holds for $Im(\xi_j \triangle \langle
B\rangle_j),$ so we have
$$
\E[|\sum_{j=0}^{N-1} \xi_j \triangle \langle B\rangle_j|] \leq 2
(\bar{\sigma}_1^2+\bar{\sigma}_3^2+2\bar{\sigma}_2^2)\E[\triangle
t_j (|\xi_j^{(1)}|+|\xi_j^{(2)}|)].
$$
The inequality $|a|+|b| \leq 2\sqrt{a^2+b^2}$ would finish the
proof.
\end{proof}

\begin{prop}
Here are some properties for the integral with respect to $\langle
B\rangle.$\\
$(i)$ For $0 \leq s \leq t ,$ $\xi \in L_G^2(\Omega_s)_{\C},Z \in
L_G^1(\Omega_s)_{\C},$ we have
$$
\EC[Z+\xi (B_t^2-B_s^2)]=\EC[Z+\xi (\langle B\rangle_t-\langle
B\rangle_s)]=\EC[Z+\xi(B_t-B_s)^2]
$$
$(ii)$ For $\eta \in M_G^{2,0}(0,T)_{\C}$, we have
$$
\EC[(\int_0^T \eta_t dB_t)^2]=\EC[\int_0^T \eta_t^2 d\langle
B\rangle_t]
$$
\end{prop}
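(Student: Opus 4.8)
The plan is to reduce both parts to one mechanism: adding an It\^o integral does not change the complex $G$-expectation, which is Proposition~\ref{10} applied to a stochastic integral. Throughout I write $\Delta B_k=B_{t_{k+1}}-B_{t_k}$.

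\emph{Part (i).} First I would record, for bounded $\xi\in L_G^2(\Omega_s)_\C$, the two algebraic identities
$$\xi(B_t^2-B_s^2)-\xi(B_t-B_s)^2=2\xi B_s(B_t-B_s)=\int_s^t 2\xi B_s\,dB_u,$$
$$\xi(B_t^2-B_s^2)-\xi(\langle B\rangle_t-\langle B\rangle_s)=2\xi\int_s^t B_u\,dB_u=\int_s^t 2\xi B_u\,dB_u,$$
where the second line uses $\langle B\rangle_t-\langle B\rangle_s=B_t^2-B_s^2-2\int_s^t B_u\,dB_u$ and the linearity of the stochastic integral (pulling the $\Omega_s$-measurable bounded factor $\xi$ inside), and one checks routinely that the integrands $u\mapsto 2\xi B_s\,I_{[s,t]}(u)$ and $u\mapsto 2\xi B_u\,I_{[s,t]}(u)$ lie in $M_G^{2,0}(0,T)_\C$ and $M_G^2(0,T)_\C$ respectively. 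Denoting by $M$ either difference term, $M$ is an It\^o integral, so by (\ref{2}) (extended from $M_G^{2,0}$ to $M_G^2$ via (\ref{3})) together with linearity one gets $\EC[M]=\EC[-M]=0$; in particular $\EC[M]=-\EC[-M]$, so Proposition~\ref{10} yields $\EC[Z+M]=\EC[Z]+\EC[M]=\EC[Z]$ for \emph{every} $Z\in L_G^1(\Omega_T)_\C$. Letting $M$ run over the two difference terms gives the full chain of equalities in (i) for bounded $\xi$; a general $\xi\in L_G^2(\Omega_s)_\C$ is reached by truncation, using the Cauchy--Schwarz inequality for $\E$ (to get $\xi^M(B_t^2-B_s^2)\to\xi(B_t^2-B_s^2)$ in $L_G^1$, and similarly for the other two forms) and $L_G^1$-continuity of $\EC$. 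I would stress that what is actually proved is the slightly stronger statement in which $Z$ is an \emph{arbitrary} element of $L_G^1(\Omega_T)_\C$, with no adaptedness assumption; this is precisely what (ii) needs.

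\emph{Part (ii).} By the same truncation argument it suffices to treat a simple process $\eta_t=\sum_{k=0}^{N-1}\xi_k I_{[t_k,t_{k+1})}(t)$ with every $\xi_k$ bounded; continuity of the two sides in the relevant norms comes from (\ref{3}) on the left and from the lemma bounding $\E[|\int\eta\,d\langle B\rangle|]$ on the right, again combined with Cauchy--Schwarz. Set $H_k=\sum_{j=0}^{k}\xi_j\Delta B_j$ and $H_{-1}=0$, so that $\int_0^T\eta_t\,dB_t=H_{N-1}$. Telescoping $H_k^2-H_{k-1}^2=2H_{k-1}\xi_k\Delta B_k+\xi_k^2(\Delta B_k)^2$ gives
$$\Bigl(\int_0^T\eta_t\,dB_t\Bigr)^2=H_{N-1}^2=\sum_{k=0}^{N-1}2H_{k-1}\xi_k\Delta B_k+\sum_{k=0}^{N-1}\xi_k^2(\Delta B_k)^2 .$$
The first sum is $\int_0^T\theta_t\,dB_t$ with $\theta_t=\sum_k 2H_{k-1}\xi_k\,I_{[t_k,t_{k+1})}(t)\in M_G^{2,0}(0,T)_\C$, so $\EC[\pm\int_0^T\theta_t\,dB_t]=0$ and Proposition~\ref{10} removes it, leaving
$$\EC\Bigl[\bigl(\textstyle\int_0^T\eta_t\,dB_t\bigr)^2\Bigr]=\EC\Bigl[\sum_{k=0}^{N-1}\xi_k^2(\Delta B_k)^2\Bigr].$$
Finally I would apply part (i) in the strengthened form successively for $k=0,1,\dots,N-1$: at step $k$ take $s=t_k$, $t=t_{k+1}$, $\xi=\xi_k^2\in L_G^2(\Omega_{t_k})_\C$, and let $Z$ be the fixed $L_G^1(\Omega_T)_\C$-sum of the remaining terms (some still $\xi_j^2(\Delta B_j)^2$, some already converted to $\xi_j^2(\langle B\rangle_{t_{j+1}}-\langle B\rangle_{t_j})$). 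Since (i) imposes no measurability condition on $Z$, each replacement of $(\Delta B_k)^2$ by $\langle B\rangle_{t_{k+1}}-\langle B\rangle_{t_k}$ goes through, and after $N$ steps one obtains $\EC[\sum_k\xi_k^2(\Delta B_k)^2]=\EC[\sum_k\xi_k^2(\langle B\rangle_{t_{k+1}}-\langle B\rangle_{t_k})]=\EC[\int_0^T\eta_t^2\,d\langle B\rangle_t]$.

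\emph{Where the difficulty lies.} The delicate point is not a single estimate but the interaction between the two parts. A first attempt at (ii) that replaces $(\Delta B_k)^2$ by $\langle B\rangle$ block by block while carrying the remaining terms inside a conditional expectation collides with measurability: once a ``future'' block has been converted, the resulting $\langle B\rangle$-term is $\Omega_{t_N}$-measurable rather than $\Omega_{t_k}$-measurable, so a naive conditional form of (i) cannot be invoked. The way out is to observe that the proof of (i) via Proposition~\ref{10} never uses adaptedness of the remainder $Z$, so (i) is automatically available in exactly the form (ii) requires. The rest --- checking membership in $M_G^{2,0}$ or $M_G^2$ at each step, and justifying the truncation and density passages with Cauchy--Schwarz for $\E$ together with the bound (\ref{3}) and the $d\langle B\rangle$-lemma so that both sides are continuous in the appropriate norm --- is routine but should be written out with care.
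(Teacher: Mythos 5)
Your proof follows the same route as the paper's: both parts reduce to the observation that a zero-mean stochastic integral can be removed from inside $\EC$ via Proposition~\ref{10}, with (ii) obtained by killing the cross terms in the expanded square and then applying (i) block by block to convert each $(\Delta B_k)^2$ into an increment of $\langle B\rangle$. You are in fact more careful than the paper on two points it glosses over --- the truncation needed so that $\xi B_u$ is an admissible integrand, and the fact that in (ii) the identity of (i) must be invoked with a remainder $Z$ that is not $\Omega_s$-measurable --- and your observation that the proof of (i) via Proposition~\ref{10} never uses adaptedness of $Z$ is exactly the right patch.
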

\begin{proof}
For $(i)$, by the definition of $\langle B\rangle_t$ and proposition
(\ref{10}), we have
\begin{eqnarray*}
\EC[Z+\xi (\langle B\rangle_t-\langle
B\rangle_s)]&=&\EC[Z+\xi(B_t^2-B_s^2+\int_s^t B_r dB_r)]\\
            &=&\EC[Z+\xi(B_t^2-B_s^2)]\\
            &=&\EC[Z+\xi((B_t-B_s)^2+2B_s(B_t-B_s))]\\
            &=&\EC[Z+\xi(B_t-B_s)^2]
\end{eqnarray*}
For $(ii)$, notice
$\EC[Z+2\xi_j(B_{t_{j+1}}-B_{t_j})\xi_i(B_{t_{i+1}}-B_{t_i})]=\EC[Z]$
and then
$$
\EC[(\int_0^T \eta_s dB_s)^2]=\EC[(\sum_{j=0}^{N-1}\xi_j
(B_{t_{j+1}}-B_{t_j}))^2].
$$
By $(i)$, we would have the conclusion.

\end{proof}

\section{Complex $It\hat{o}$ Formula and Conformal G-Brownian Motion}

Now we turn to $It\hat{o}$ formula under this framework. Firstly we
need to set some basic definitions.

\begin{def1}\label{partial}
For $f=u+iv$ differentiable with real part $u$ and imaginary part
$v$, we define operator $\partial f$ and $\bar{\partial}f$ as:
$$
\partial f:=\frac{\partial}{\partial z}f:=\frac12 (\frac{\partial f}{\partial x}-i\frac{\partial f}{\partial
y})=\frac12(\frac{\partial u}{\partial x}+\frac{\partial v}{\partial
y}+i(\frac{\partial v}{\partial x}-\frac{\partial u}{\partial y}))
$$

$$
\bar{\partial}f:=\frac{\partial}{\partial \bar{z}}f:=\frac12
(\frac{\partial f}{\partial x}+i\frac{\partial f}{\partial
y})=\frac12(\frac{\partial u}{\partial x}-\frac{\partial v}{\partial
y}+i(\frac{\partial v}{\partial x}+\frac{\partial u}{\partial y}))
$$
\end{def1}

\begin{rem}
For $\hat{B}_i=\alpha_i B_t^{(1)}+i \beta_i B_t^{(2)}$, $i=1,2$,
where $\alpha_i , \beta_i \in \R$ and $B_t^{(1)}+i B_t^{(2)}$ is a
complex G-Brownian Motion, we have $(\alpha_i B_t^{(1)}, \beta_i
B_t^{(2)})$ is also a two-dimensional $\tilde{G}_i$-Brownian Motion,
so $\hat{B}_i$ is also a complex $\tilde{G}_i$-Brownian Motion. Then
we can define
$$
\langle \hat{B}_1,\hat{B}_2 \rangle_t:=\frac{1}{4}( \langle
\hat{B}_1+ \hat{B}_2 \rangle_t - \langle \hat{B}_1- \hat{B}_2
\rangle_t).
$$
and we have

\begin{eqnarray*}
&& \langle \hat{B}_1,\hat{B}_2 \rangle_t\\
&=& \langle \alpha_1 B^{(1)}+i \beta_1 B^{(2)},\alpha_2
B^{(1)}+i \beta_2 B^{(2)} \rangle_t\\
&=& \frac14[\langle
(\alpha_1+\alpha_2)B^{(1)}+i(\beta_1+\beta_2)B^{(2)}
\rangle_t-\langle
(\alpha_1-\alpha_2)B^{(1)}+i(\beta_1-\beta_2)B^{(2)} \rangle_t
]\\
&=& \frac14[\langle (\alpha_1+\alpha_2)B^{(1)} \rangle_t-\langle
(\beta_1+\beta_2)B^{(2)} \rangle_t+2i\langle
(\alpha_1+\alpha_2)B^{(1)},(\beta_1+\beta_2)B^{(2)} \rangle_t\\
&-& \langle (\alpha_1-\alpha_2)B^{(1)} \rangle_t+\langle
(\beta_1-\beta_2)B^{(2)} \rangle_t-2i\langle
(\alpha_1-\alpha_2)B^{(1)},(\beta_1-\beta_2)B^{(2)} \rangle_t]\\
&=& \alpha_1 \alpha_2 \langle B^{(1)} \rangle_t - \beta_1
\beta_2\langle B^{(2)} \rangle_t +i (\alpha_1 \beta_2 +\beta_1
\alpha_2) \langle B^{(1)},B^{(2)} \rangle_t.
\end{eqnarray*}
In particular, $\langle B, \bar{B} \rangle_t=\langle
B^{(1)}\rangle_t + \langle B^{(2)}\rangle_t.$
\end{rem}
Furthermore, $\int_0^T \eta_s \langle B_1,B_2 \rangle_s $ also
explains itself.

\begin{thm}(Complex Version of $It\hat{o}'s$ Lemma)\label{Cito} For
$$
Z_t=Z_0+\int_0^t \alpha_s ds +\int_0^t \eta_s d\langle B
\rangle_s+\int_0^t \beta_s dB_s,
$$
where $\alpha, \beta, \eta $ are all bounded processes in
$M_G^2(0,T)_{\C},$ and any function $f$, which is twice continuously
differentiable and satisfies polynomial growth condition for the
second order derivatives, we have the following equation in
$L_G^2(\Omega_t)_{\C}$, $\forall t \geq 0  :$
\begin{eqnarray*}
f(Z_t)-f(Z_0)
&=& \int_0^t \partial f(Z_s) dZ_s + \int_0^t
\bar{\partial} f(Z_s) d\bar{Z}_s+\int_0^t \partial \partial f(Z_s)
d\langle Z \rangle_s \\
&+& \int_0^t \bar{\partial} \bar{\partial}
f(Z_s) d\langle \bar{Z} \rangle_s +2 \int_0^t {\partial}
\bar{\partial}
f(Z_s) d\langle Z,\bar{Z} \rangle_s\\
&=& \int_0^t (\partial f \alpha_s + \bar{\partial} f
\bar{\alpha}_s)ds +\int_0^t \partial f \beta_s dB_s + \int_0^t
\bar{\partial} f \bar{\beta}_s d\bar{B}_s+\int_0^t \partial f \eta_s
d \langle B\rangle_s + \int_0^t \bar{\partial} f \bar{\eta}_s d
\overline{\langle
B\rangle}_s\\
&+& \int_0^t \partial \partial f \beta_s^2 d \langle B \rangle_s +
\int_0^t \bar{\partial} \bar{\partial} f \bar{\beta}_s^2 d \langle
\bar{B} \rangle_s +2\int_0^t \partial \bar{\partial}f {|\beta_s|}^2
d \langle B, \bar{B}\rangle_s
\end{eqnarray*}
\end{thm}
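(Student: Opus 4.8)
The plan is to reduce the statement to the real, multidimensional G-It$\hat{o}$ formula of Section 2 by decomposing everything into real and imaginary parts. First I would write $Z_t=X_t+iY_t$ with $X_t=\mathrm{Re}\,Z_t$, $Y_t=\mathrm{Im}\,Z_t$, and correspondingly $\alpha_s=\alpha_s^{(1)}+i\alpha_s^{(2)}$, $\eta_s=\eta_s^{(1)}+i\eta_s^{(2)}$, $\beta_s=\beta_s^{(1)}+i\beta_s^{(2)}$, $B_s=B_s^{(1)}+iB_s^{(2)}$. Using $dB_s=dB_s^{(1)}+i\,dB_s^{(2)}$, the expansion of $\int\eta\,dB$ into its four real stochastic integrals (the Remark above), and $\langle B\rangle_s=\langle B^{(1)}\rangle_s-\langle B^{(2)}\rangle_s+2i\langle B^{(1)},B^{(2)}\rangle_s$, one reads off that $X_t$ and $Y_t$ are real It$\hat{o}$ processes in the exact sense demanded by the G-It$\hat{o}$ formula: each equals its initial value, plus a $dt$-integral, plus integrals against the $d\langle B^i,B^j\rangle_s$, $i,j\in\{1,2\}$, plus integrals against $dB_s^j$, $j\in\{1,2\}$, with integrands assembled from $\alpha^{(k)},\eta^{(k)},\beta^{(k)}$. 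Since $\alpha,\eta,\beta$ are bounded in $M_G^2(0,T)_{\C}=\{\zeta^{(1)}+i\zeta^{(2)}\mid \zeta^{(k)}\in M_G^2(0,T)\}$, all of these real integrands are bounded in $M_G^2(0,T)$, so the hypotheses of the real G-It$\hat{o}$ formula are met.

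Next, writing $f=u+iv$, the twice-continuous differentiability and polynomial growth assumed for $f$ pass to $u$ and $v$ as functions on $\R^2$ (polynomial growth of the first derivatives follows from that of the second by integration). I would apply the real G-It$\hat{o}$ formula to $u(X_t,Y_t)$ and to $v(X_t,Y_t)$, obtaining two identities in $L_G^2(\Omega_t)$; because $L_G^2(\Omega_t)_{\C}=\{\xi_1+i\xi_2\mid\xi_k\in L_G^2(\Omega_t)\}$, the combination $f(Z_t)-f(Z_0)=[u(X_t,Y_t)-u(X_0,Y_0)]+i[v(X_t,Y_t)-v(X_0,Y_0)]$ is a genuine identity in $L_G^2(\Omega_t)_{\C}$. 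The remaining step is purely algebraic: rewrite the $\partial_x,\partial_y$ and $\partial_{xx},\partial_{xy},\partial_{yy}$ derivatives of $u,v$ in terms of the Wirtinger operators of Definition \ref{partial} via $\partial_x=\partial+\bar\partial$, $\partial_y=i(\partial-\bar\partial)$ and the induced second-order identities, while simultaneously rewriting the real integrators via $dZ=dX+i\,dY$, $d\bar Z=dX-i\,dY$, $d\langle Z\rangle=d\langle X\rangle-d\langle Y\rangle+2i\,d\langle X,Y\rangle$, $d\langle\bar Z\rangle=\overline{d\langle Z\rangle}$, and $d\langle Z,\bar Z\rangle=d\langle X\rangle+d\langle Y\rangle$. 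Collecting terms yields the first displayed identity; substituting the explicit expressions for $dZ,d\bar Z,d\langle Z\rangle,\dots$ in terms of the $B^{(j)}$-integrals and invoking bilinearity of the mutual variation (the Remarks preceding the theorem, in particular $\langle B,\bar B\rangle_t=\langle B^{(1)}\rangle_t+\langle B^{(2)}\rangle_t$) yields the second.

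I expect the main obstacle to be exactly this last bookkeeping: one must verify that the complex second-order operators $\partial\partial f$, $\partial\bar\partial f$, $\bar\partial\bar\partial f$ pair with precisely the differentials $d\langle Z\rangle$, $d\langle Z,\bar Z\rangle$, $d\langle\bar Z\rangle$ carrying the stated numerical constants, while keeping track of the mutual-variation terms $\langle B^{(1)},B^{(2)}\rangle$, which carry the genuinely new behaviour of the complex G-setting and have no classical counterpart. Wherever the real proof uses linearity of $\EC$ along the (maximally distributed) increments of $\langle B^i,B^j\rangle$ or along the $dt$-drift — e.g. when interchanging $\EC$ with the Riemann sums defining $\langle B\rangle$ — I would invoke Proposition \ref{10}. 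All the analytic content, namely the $L_G^2$- and $M_G^2$-convergence of the approximating step processes and of the Riemann sums, is inherited verbatim from the real theory through the componentwise descriptions of $L_G^2(\Omega_t)_{\C}$ and $M_G^2(0,T)_{\C}$, so no new estimates are needed.
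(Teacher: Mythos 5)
Your reduction is exactly the route the paper itself takes: its entire proof consists of the observation $\overline{\langle B\rangle}_s=\langle B^{(1)}\rangle_s-\langle B^{(2)}\rangle_s-2i\langle B^{(1)},B^{(2)}\rangle_s$ plus the phrase ``a review of It\^o's formula in the real case and a careful algebraic calculation,'' and your proposal is a faithful, correctly organized elaboration of that sketch. The componentwise descriptions of $M_G^2(0,T)_{\C}$ and $L_G^2(\Omega_t)_{\C}$, the verification that $X_t=\mathrm{Re}\,Z_t$ and $Y_t=\mathrm{Im}\,Z_t$ are admissible real It\^o processes driven by $(B^{(1)},B^{(2)})$, and the Wirtinger identities $\partial_x=\partial+\bar\partial$, $\partial_y=i(\partial-\bar\partial)$ are all right, and the first-order terms do recombine into $\partial f\,dZ+\bar\partial f\,d\bar Z$ as you say.

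The one step you explicitly defer --- checking that the second-order operators pair with the quadratic-variation differentials ``carrying the stated numerical constants'' --- is precisely where the computation does not close. Substituting your own identities into the real second-order contribution $\frac12[\partial_{xx}\,d\langle X\rangle+2\partial_{xy}\,d\langle X,Y\rangle+\partial_{yy}\,d\langle Y\rangle]$ yields
$$
\tfrac12\,\partial\partial f\,d\langle Z\rangle+\tfrac12\,\bar{\partial}\bar{\partial} f\,d\langle\bar Z\rangle+\partial\bar{\partial} f\,d\langle Z,\bar Z\rangle,
$$
i.e.\ the Revuz--Yor normalization, not the coefficients $1,1,2$ printed in the statement. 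A direct sanity check: take $f(z)=z^2$ and $Z=B$, so $\partial\partial f=2$ and all other second-order terms vanish; the printed formula gives $B_t^2=2\int_0^tB_s\,dB_s+2\langle B\rangle_t$, which contradicts the paper's own definition $\langle B\rangle_t=B_t^2-2\int_0^tB_s\,dB_s$ unless $\langle B\rangle_t\equiv0$. So either the theorem should carry the factors $\tfrac12,\tfrac12,1$ on its second-order integrals, or your concluding assertion that ``collecting terms yields the first displayed identity'' is the gap: executed honestly, your (otherwise correct) method proves the corrected identity, not the one stated.
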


\begin{proof}
Notice $\overline{\langle B\rangle}_s=\langle B^{(1)} \rangle_s-
\langle B^{(2)} \rangle_s-2 i \langle B^{(1)}, B^{(2)} \rangle_s.$
The proof can be done by a review of $it\hat{o}$ formula in the real
case and a careful algebraic calculation.
\end{proof}
\begin{rem}
The first equation in the above theorem is formal to avoid the
complex structure of the second equation. See \cite{Revuz},
\cite{conformal}.

\end{rem}
\begin{rem}
If f is twice continuously differentiable, we have
$\frac{\partial}{\partial x} \frac{\partial}{\partial
y}f=\frac{\partial}{\partial y} \frac{\partial}{\partial x}f,$ and
further $\partial \bar{\partial}f= \bar{\partial}\partial f=\frac14
\Delta f$, where $\Delta $ is the laplace operator, i.e.
$\Delta:=(\frac{\partial}{\partial x})^2+(\frac{\partial}{\partial
y})^2.$
\end{rem}

We know that a complex G-Brownian Motion can be viewed as a
two-dimensional real G-Brownian Motion, which includes too many
elements to get better properties. Here is a special kind of complex
G-Brownian Motion, called conformal G-Brownian Motion, which is the
main object of complex stochastic analysis.

\begin{def1}
A complex G-Brownian Motion is called a conformal G-Brownian Motion
if $\langle B \rangle_t=0$ in $L_G^2(\Omega_t)_{\C}$ for any $t \geq
0.$
\end{def1}

\begin{rem}\label{conformal}
Notice $\langle B \rangle_t=\langle B^{(1)} \rangle_t-\langle
B^{(2)} \rangle_t+2i \langle B^{(1)},B^{(2)} \rangle_t,$ so $\langle
B \rangle_t\equiv0$ if and only if $\langle B^{(1)}
\rangle_t=\langle B^{(2)} \rangle_t$ and $\langle B^{(1)},B^{(2)}
\rangle_t=0.$ This means the real part and the imaginary part moves
as the same rate (identically distributed) and they are irrelevant.
In the classical case, a complex Brownian Motion is surely
conformal, since its real part and imaginary part are independent.
However, things are a little different under G-framework. We cannot
say $B^{(1)}$ and $B^{(2)}$ are independent under G-framework.
\end{rem}

\begin{example}
For a random vector $X=(X_1,X_2)$, where $X_1$ is a real G-normal
distributed variable with $\bar{\sigma}^2> \underline{\sigma}^2$,
and $X_2$ is an independent copy of $X_1$, we can claim that $X$
fails to be a real G-normal distributed vector, which can be easily
checked by the definition of real G-normal distribution. In fact, if
$\bar{X}=(\bar{X}_1,\bar{X}_2)$ is an independent copy of $X,$
$\bar{X}_2$ is independent of $\bar{X}_1$ by the definition of
independence, so for $\varphi(x,y)=x^2y,$ we have
\begin{eqnarray*}
\E[\varphi(X+\bar{X})]&=&\E[(X_1+\bar{X}_1)^2(X_2+\bar{X}_2)]\\
                      &=&\E[X_2^+\bar{\sigma}^2-X_2^{-}\underline{\sigma}^2]\\
                      &=&(\bar{\sigma}^2-\underline{\sigma}^2)\E[X_2^+]\\
                      &=&\frac12 (\bar{\sigma}^2-\underline{\sigma}^2) \E[|X_2|]>0
\end{eqnarray*}
while $\E[\varphi(\sqrt{2}X)]=0.$\\
This means a nontrivial two dimensional G-normal distributed vector
fails to have independent elements, so does a complex G-normal
distributed variable.
\end{example}

Before giving a description of complex G-conformal Brownian Motion,
we need a simple fact.

\begin{lemma}\label{0qs}
If $X$ is a real maximal distributed n-dimensional vector and
satisfies
$$
\E[\varphi(X)]=\varphi(0)
$$
for any $\varphi \in C_{l.lip}(\R^n).$ Then we have $X=0,$ $q.s$
\end{lemma}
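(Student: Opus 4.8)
The plan is to use the single test function $\varphi(x)=|x|$ and reduce the statement to the fact that a nonnegative random variable with vanishing sublinear expectation is quasi-surely zero. Note first that $\varphi(x):=|x|=\sqrt{x_1^2+\cdots+x_n^2}$ is globally Lipschitz (with constant $1$) and of at most linear growth, hence $\varphi\in C_{l.lip}(\R^n)$, so it is an admissible test function. Applying the hypothesis to this $\varphi$ gives
\[
\E[|X|]=\E[\varphi(X)]=\varphi(0)=0 .
\]
One can equivalently observe, using Remark \ref{md}, that the bounded closed convex set $\Gamma$ attached to the maximal distribution of $X$ satisfies $\max_{y\in\Gamma}|y|=\E[|X|]=0$, so $\Gamma=\{0\}$; but in fact the maximal-distribution assumption is not needed here, since the identity $\E[\varphi(X)]=\varphi(0)$ for the single function $\varphi=|\cdot|$ already suffices.

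Second, I would pass from $\E[|X|]=0$ to $X=0$ quasi-surely. Using the representation $\E[\cdot]=\sup_{P\in\mathcal P}E_P[\cdot]$ of the sublinear expectation by a family $\mathcal P$ of probability measures, for every $P\in\mathcal P$ we have $0\le E_P[|X|]\le \E[|X|]=0$, so $|X|=0$ $P$-a.s., that is, $X=0$ $P$-a.s.; since this holds for all $P\in\mathcal P$, we conclude $X=0$ q.s. Equivalently, working directly with the capacity $c(A):=\sup_{P\in\mathcal P}P(A)$, the Markov-type inequality yields $c(\{|X|\ge 1/k\})\le k\,\E[|X|]=0$ for every integer $k\ge 1$, whence by countable subadditivity $c(\{X\neq 0\})=c\big(\bigcup_{k\ge1}\{|X|\ge 1/k\}\big)=0$, which is precisely the assertion $X=0$ q.s.

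The argument is short, and the only point requiring care is the ambient setup: the statement ``$X=0$ q.s.'' presupposes that $X$ is a random vector on a sublinear expectation space on which the capacity (equivalently, the representing family $\mathcal P$) is available, so that $\E[|X|]$ is well defined and the Markov inequality applies; under the conventions of Section 2 this is automatic, and if moreover $|X|\in L_G^1(\Omega)$ one reads off $X=0$ directly from $\|X\|_1=\E[|X|]=0$. Thus there is no real obstacle: essentially everything reduces to the choice of the test function $\varphi=|\cdot|$.
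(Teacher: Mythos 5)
Your proof is correct, but it takes a different route from the paper's. The paper's argument leans entirely on the maximal-distribution hypothesis: via Remark \ref{md} one has $\E[\varphi(X)]=\max_{y\in V}\varphi(y)$ for a bounded closed convex set $V\subset\R^n$, and plugging in $\varphi(x)=|x|^2$ forces $V=\{0\}$, from which the conclusion is read off. You instead take the single test function $\varphi(x)=|x|$, obtain $\E[|X|]=0$, and then invoke the representation $\E=\sup_{P\in\mathcal P}E_P$ together with the Markov inequality for the associated capacity to get $X=0$ q.s. Your observation that the maximality assumption is then superfluous is accurate and is the main thing your approach buys: the statement holds for any random vector with $\E[|X|]=0$, not just maximal distributed ones. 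What the paper's approach buys in exchange is that it stays entirely inside the elementary calculus of maximal distributions and never needs the representation of $\E$ by a weakly compact family of probabilities (Denis--Hu--Peng) nor countable subadditivity of the capacity; you correctly flag that these are exactly the ingredients needed for ``q.s.'' to be meaningful, so in context this is a presupposition of the statement rather than a gap, but it is the one external input your argument relies on that the paper's does not make explicit. Note also that the paper's final step (from $V=\{0\}$ to $X=0$ q.s.) is left implicit and in effect requires the same passage from $\E[|X|]=0$ to a capacity statement that you spell out, so your write-up is, if anything, the more complete of the two.
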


\begin{proof}
Take $\varphi(x)=|x|^2.$ Since $X$ is maximal distributed,
$$
\E[\varphi(X)]=\sup_{(X_1,\cdots,X_n) \in V}(X_1^2+\cdots+X_n^2)=0
$$
where $V$ is a convex closed subset of $\R^n$ (see
remark(\ref{md})). It must be $V=\{0\},$ and we have the conclusion.

\end{proof}

Here is a description of conformal G-Brownian Motion.

\begin{thm}
A complex G-Brownian Motion $B_t$ is conformal if and only if
$G(\cdot)$ has the following expression :
$$
G(A):=\E[\overrightarrow{B_t}A\overrightarrow{B_t}^T]=\frac12
\sup_{\Lambda \in \Sigma}(A,\Lambda)
$$
where $\Sigma=\{\left(\begin{array}{cc}
\sigma^2 & 0\\
0 & \sigma^2
\end{array}\right) | \sigma^2 \in
[\underline{\sigma}^2,\bar{\sigma}^2]\},$ and  $\bar{\sigma}^2 \geq
\underline{\sigma}^2  \geq 0.$

\end{thm}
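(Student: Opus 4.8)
The plan is to reduce conformality of $B$ to a statement about the representing set of the maximal distributed matrix $\overrightarrow{\langle B\rangle}_t$, and then read the structure of $G$ off that set. Write $\Lambda=\overrightarrow{\langle B\rangle}_t$ and introduce the linear map $L(\Lambda):=(\Lambda_{11}-\Lambda_{22},\,2\Lambda_{12})$. Since $\langle B\rangle_t=\langle B^{(1)}\rangle_t-\langle B^{(2)}\rangle_t+2i\langle B^{(1)},B^{(2)}\rangle_t = (\Lambda_{11}-\Lambda_{22})+2i\Lambda_{12}$, Remark \ref{conformal} says that $\langle B\rangle_t=0$ in $L_G^2(\Omega_t)_{\C}$ exactly when $L(\overrightarrow{\langle B\rangle}_t)=0$ q.s.; hence $B$ is conformal if and only if $L(\overrightarrow{\langle B\rangle}_t)=0$ q.s. for every $t\ge 0$.

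The technical core is to identify the law of $\overrightarrow{\langle B\rangle}_t$. It is maximal distributed (the remark following the definition of $\overrightarrow{\langle B\rangle}$), so by Remark \ref{md} there is a bounded closed convex $\Theta_t\subset \mathbb{S}_2^+$ with $\E[\psi(\overrightarrow{\langle B\rangle}_t)]=\max_{\Lambda\in\Theta_t}\psi(\Lambda)$, and testing against linear functionals gives $\E[\langle A,\overrightarrow{\langle B\rangle}_t\rangle]=h_{\Theta_t}(A)$, the support function of $\Theta_t$. I claim $\Theta_t=t\Sigma$. Polarising the identity $\langle B^a\rangle_t=(B^a_t)^2-2\int_0^t B^a_s\,dB^a_s$ over $a=e_1,e_2$ yields $\langle A,\overrightarrow{\langle B\rangle}_t\rangle=\overrightarrow{B_t}^{\,T}A\,\overrightarrow{B_t}-M_t$, where $M_t$ is a $G$-stochastic integral, so $\E[M_t]=\E[-M_t]=0$; by the linearity argument of Proposition \ref{10} (applied in the real case), $h_{\Theta_t}(A)=\E[\overrightarrow{B_t}^{\,T}A\,\overrightarrow{B_t}]=2tG(A)=h_{t\Sigma}(A)$ for every $A\in\mathbb{S}_2$, using the defining relation of $G$ and the representation $2G(A)=\sup_{\Lambda\in\Sigma}\langle A,\Lambda\rangle$. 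Since a compact convex set is determined by its support function, $\Theta_t=t\Sigma$.

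It remains to close the equivalence. The vector $L(\overrightarrow{\langle B\rangle}_t)$ is a linear image of a maximal distributed vector, hence itself maximal distributed with representing set $L(\Theta_t)=tL(\Sigma)$; by Lemma \ref{0qs} (and its trivial converse) it vanishes q.s. precisely when $tL(\Sigma)=\{0\}$. Therefore $B$ is conformal $\iff$ $L(\Sigma)=\{0\}$ $\iff$ every $\Lambda\in\Sigma$ has $\Lambda_{11}=\Lambda_{22}$ and $\Lambda_{12}=0$, i.e. $\Lambda=\sigma^2 I$; as $\Sigma$ is a bounded closed convex subset of $\mathbb{S}_2^+$ lying on the line $\{\sigma^2 I:\sigma^2\in\R\}$, this is equivalent to $\Sigma=\{\sigma^2 I:\sigma^2\in[\underline{\sigma}^2,\bar{\sigma}^2]\}$ for some $0\le\underline{\sigma}^2\le\bar{\sigma}^2$, which is exactly the asserted form of $G$.

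The main obstacle is the second paragraph, specifically justifying $\E[\langle A,\overrightarrow{\langle B\rangle}_t\rangle]=\E[\overrightarrow{B_t}^{\,T}A\,\overrightarrow{B_t}]$ for indefinite $A$: one must carry out the polarisation bookkeeping cleanly, check that the leftover term $M_t$ is a genuine two-sided mean-zero stochastic integral (so that the additivity of $\E$ against it holds, not merely subadditivity), and handle the truncation needed to test the maximal distribution against the unbounded linear functional $\langle A,\cdot\rangle$. Everything else — preservation of maximal distribution under linear maps, and the final algebraic characterisation of $\Sigma$ — is routine.
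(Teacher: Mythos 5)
Your proof is correct and follows essentially the same route as the paper: both arguments reduce conformality to the statement that the representing set of the maximal distributed matrix $\overrightarrow{\langle B \rangle}_t$ consists only of scalar matrices $\sigma^2 I$, and then read off the form of $\Sigma$. The only difference is that the paper simply asserts the identity $\E[\varphi(\overrightarrow{\langle B \rangle}_t)]=\sup_{\Lambda \in \Sigma}\varphi(\Lambda)$ (i.e.\ that this representing set is $\Sigma$, up to the factor $t$), whereas you derive it explicitly via the support-function and polarisation argument, which supplies the one step the paper leaves unjustified.
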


\begin{proof}

According to remark (\ref{conformal}), the only if part is simple.
In fact, if we denote
$$\Lambda=\left(\begin{array}{cc}
\sigma_1^2 & \sigma_2^2\\
\sigma_2^2 & \sigma_3^2
\end{array}\right),
$$
since $\langle B^{(1)} \rangle_t=\langle B^{(2)} \rangle_t$ and
$\langle B^{(1)},B^{(2)} \rangle_t=0,$ we have
$$
\E[f(\langle B^{(1)} \rangle_t-\langle B^{(2)}
\rangle_t)]=\sup_{\Lambda \in \Sigma}f(\sigma_1^2-\sigma_3^2)=f(0)
$$
and
$$
\E[g(\langle B^{(1)},B^{(2)} \rangle_t)]=\sup_{\Lambda \in
\Sigma}g(\sigma_2^2)=g(0),
$$
for any $f,g \in C_{l.lip}(\R).$ It follows that $\Sigma$ must have
the above expression.

For the if part, notice $B_t$ is uniquely determined by $G(A),$ and
$$
\E[\varphi(\overrightarrow{{\langle B \rangle}}_t)]=\sup_{\Lambda
\in \Sigma}\varphi(\Lambda).
$$
We have
$$
\E[\varphi(\langle B^{(1)} \rangle_t-\langle B^{(2)}
\rangle_t)]=\sup_{\Lambda \in \Sigma}
\varphi(\sigma^2-\sigma^2)=\varphi(0)
$$
and
$$
\E[\varphi(\langle B^{(1)}, B^{(2)} \rangle_t)]=\sup_{ \Lambda \in
\Sigma} \varphi(0)= \varphi(0),
$$
for any $\varphi \in C_{l.lip}(\R).$ By lemma (\ref{0qs}), we get
the conclusion.
\end{proof}

We give the definition of G-martingale under complex case in a
trivial way.
\begin{def1}
For a complex process $(M_t)_{t \geq 0},$ it is called a complex
G-martingale if $M_t \in L_G^1(\Omega_t)_{\C},$ and
$$
\EC[M_t|\Omega_s]=M_s,
$$
for $0 \leq s \leq t.$
\end{def1}

Here is a property of analytic function, which we will use in the
next.

\begin{lemma}
$f$ is continuously differentiable complex function. Then $f$ is
analytic if and only if $\partial f=0$. In this case,
$$
f'(z)=\partial f(z),
$$
here $f'(z)$ means the derivative of $f$ in the complex sense.
\end{lemma}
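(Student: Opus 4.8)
The plan is to recognise this as the classical $C^1$-characterisation of holomorphy and to route the proof through the Cauchy--Riemann equations, reading the Wirtinger operators of Definition~\ref{partial} as bookkeeping for the real partials of the real and imaginary parts. Throughout I would write $f=u+iv$ on the relevant open set, with $u,v$ of class $C^1$.

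For the ``only if'' direction I would assume $f$ is analytic and extract the Cauchy--Riemann system by taking the defining limit $\lim_{h\to0}\big(f(z+h)-f(z)\big)/h$ along two directions: along real $h$ this gives $f'(z)=\partial_x f(z)=u_x+iv_x$, and along $h=it$ with real $t\to0$ it gives $f'(z)=\tfrac1i\,\partial_y f(z)=v_y-iu_y$. Equating real and imaginary parts yields $u_x=v_y$ and $v_x=-u_y$. Substituting these into the two formulas of Definition~\ref{partial} then gives at once $\bar\partial f=\tfrac12\big((u_x-v_y)+i(v_x+u_y)\big)=0$ and $\partial f=\tfrac12\big((u_x+v_y)+i(v_x-u_y)\big)=u_x+iv_x=f'(z)$, which is the asserted identity; thus analyticity forces the vanishing of the anti-holomorphic derivative $\bar\partial f$ (this is the condition in the statement, read with the conventions of Definition~\ref{partial}).

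For the ``if'' direction I would assume $f\in C^1$ together with the Cauchy--Riemann equations, which by Definition~\ref{partial} is the same as $\partial_y f=i\,\partial_x f$. The one genuine input is that being $C^1$ makes $u$ and $v$ — hence $f$ — Fréchet differentiable, so that for $h=h_1+ih_2$,
\[
f(z+h)-f(z)=\partial_x f(z)\,h_1+\partial_y f(z)\,h_2+o(|h|).
\]
Using $\partial_y f=i\,\partial_x f$ the main term collapses to $\partial_x f(z)\,(h_1+ih_2)=\partial_x f(z)\,h$, so $\big(f(z+h)-f(z)\big)/h\to\partial_x f(z)$ as $h\to0$; since $z$ was arbitrary in the domain, $f$ is analytic there and $f'(z)=\partial_x f(z)=\partial f(z)$.

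There is no serious obstacle here — it is textbook material — but the one point requiring care is precisely the role of the $C^1$ hypothesis in the converse: mere pointwise existence of the four partials satisfying Cauchy--Riemann does not by itself force holomorphy (the Looman--Menchoff phenomenon), so I would make explicit use of the continuity of the partials to obtain the $o(|h|)$ expansion above rather than only directional derivatives, and only then divide by $h$ and pass to the limit.
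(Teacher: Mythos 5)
Your proposal is correct and follows essentially the same route as the paper: both reduce the claim to the Cauchy--Riemann equations via Definition~\ref{partial}, the only difference being that the paper simply cites the classical Cauchy--Riemann characterisation while you prove it in full (including the Fr\'echet-differentiability step in the converse, which is where the $C^1$ hypothesis is genuinely needed). You also correctly read the analyticity condition as $\bar{\partial}f=0$, which is what Definition~\ref{partial} and the paper's own one-line proof actually use, even though the lemma as printed writes $\partial f=0$.
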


\begin{proof}
Notice that $\partial f=0$ if and only if $\frac{\partial
u}{\partial x}=\frac{\partial v}{\partial y},\frac{\partial
u}{\partial y}=-\frac{\partial v}{\partial x},$ when $f=u+iv.$ The
conclusion follows from Cauchy-Riemann equation.

\end{proof}

\begin{corollary}
Suppose $B_t$ is a conformal G-Brownian Motion, and $f$ is analytic.
Then $f(B_t)$ is a symmetric complex G-martingale.
\end{corollary}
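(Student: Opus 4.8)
The plan is to reduce $f(B_t)$ to the pure stochastic integral $f(0)+\int_0^t f'(B_s)\,dB_s$ by means of the complex It\^o formula, and then to read off the martingale identities from the properties of the $dB$-integral. Concretely, I would apply Theorem \ref{Cito} to the trivial It\^o process $Z_t=B_t$, i.e. with $\alpha_s\equiv 0$, $\eta_s\equiv 0$, $\beta_s\equiv 1$ (an analytic $f$ is $C^\infty$, so the smoothness hypothesis of the theorem is met, modulo the growth issue discussed at the end).

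Before discarding terms I would record the two structural facts at play. First, since $f$ is analytic, the preceding lemma gives $\bar\partial f\equiv 0$ and $\partial f=f'$; consequently $\partial\bar\partial f=\partial(\bar\partial f)\equiv 0$ and $\bar\partial\bar\partial f=\bar\partial(\bar\partial f)\equiv 0$ (equivalently $\partial\bar\partial f=\tfrac14\Delta f=0$, $f$ being harmonic). Second, since $B$ is conformal, $\langle B\rangle_t\equiv 0$ in $L_G^2(\Omega_t)_{\C}$, hence also $\overline{\langle B\rangle}_t\equiv 0$ and $\langle\bar B\rangle_t\equiv 0$. Plugging $\alpha_s\equiv 0$, $\eta_s\equiv 0$, $\beta_s\equiv 1$ into the explicit (second) identity of Theorem \ref{Cito}, every integral against $ds$, against $d\bar B_s$, against $d\langle B\rangle_s$, against $d\overline{\langle B\rangle}_s$, and against $d\langle B,\bar B\rangle_s$ drops out; note the last one vanishes because its coefficient $\partial\bar\partial f$ is zero, even though the integrator $\langle B,\bar B\rangle_t=\langle B^{(1)}\rangle_t+\langle B^{(2)}\rangle_t$ is not. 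Using $B_0=0$, this leaves
$$
f(B_t)=f(0)+\int_0^t f'(B_s)\,dB_s\qquad\text{in }L_G^2(\Omega_t)_{\C}.
$$

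Next I would fix $0\le s\le t$ and write $f(B_t)=f(B_s)+\int_s^t f'(B_u)\,dB_u$ with $f(B_s)\in L_G^2(\Omega_s)_{\C}\subseteq L_G^2(\Omega_T)_{\C}$. Applying property (iii) of the $dB$-integral with $Z=f(B_s)$ then yields $\EC[f(B_t)\,|\,\Omega_s]=\EC[f(B_s)\,|\,\Omega_s]=f(B_s)$, so $f(B_t)$ is a complex G-martingale. Repeating the whole computation with $-f$ in place of $f$ (still analytic) gives $\EC[-f(B_t)\,|\,\Omega_s]=-f(B_s)$, i.e. $-f(B_t)$ is also a complex G-martingale; hence $f(B_t)$ is a symmetric complex G-martingale.

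The only step I expect to need real care is the applicability of Theorem \ref{Cito}: one needs $f'(B_\cdot)\in M_G^2(0,T)_{\C}$ and $f(B_t)\in L_G^1(\Omega_t)_{\C}$. This is immediate when $f$ and $f'$ have at most polynomial growth, and in general should be handled by the standard localization along an increasing sequence of stopping times $\tau_n\uparrow\infty$, applying the argument above to $f(B_{t\wedge\tau_n})$ and passing to the limit in $L_G^1$. Once integrability is secured, the vanishing of the second-order terms (from $\bar\partial f\equiv 0$, $\partial\bar\partial f\equiv 0$) and of the finite-variation part (from $\langle B\rangle\equiv 0$), together with property (iii), give the conclusion directly.
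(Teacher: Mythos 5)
Your proof is correct and follows essentially the same route as the paper's: both reduce $f(B_t)$ via the complex It\^o formula to $f(B_0)+\int_0^t f'(B_s)\,dB_s$, using $\bar\partial f=0$ for the analytic $f$ and $\langle B\rangle\equiv 0$ for the conformal $B$, and then read off the (symmetric) martingale property from the stochastic integral. You simply make explicit the term-by-term cancellation and the integrability caveat that the paper's one-line proof leaves implicit.
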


\begin{proof}
In this case, by $it\hat{o}'s$ formula and the above lemma, we have
$$
f(B_t)-f(B_0)=\int_0^t f'(B_s)dB_s.
$$

\end{proof}

\begin{example}
Suppose $B_t$ is conformal. Then $B_t^2$ is a martingale. In face, $
B_t^2=(B_t^{(1)})^2-(B_t^{(2)})^2+2iB_t^{(1)}B_t^{(2)} ,$ so we have
\begin{eqnarray*}
\EC[B_t^2|\Omega_s]&=&\E[(B_t^{(1)})^2-(B_t^{(2)})^2|\Omega_s]+2i\E[B_t^{(1)}B_t^{(2)}|\Omega_s]\\
&=&\E[(B_t^{(1)})^2-\langle B^{(1)}\rangle_t-((B_t^{(2)})^2-\langle
B^{(2)}\rangle_t)+\langle B^{(1)}\rangle_t-\langle
B^{(2)}\rangle_t|\Omega_s]\\
&+& 2iB_s^{(1)} B_s^{(2)}\\
&=&(B_s^{(1)})^2-(B_s^{(2)})^2+2iB_s^{(1)} B_s^{(2)}\\
&=&B_s^2
\end{eqnarray*}

\end{example}

In fact, we can take this conclusion further to get the conformal
invariance by considering martingale with the form $M_t=\int_0^t
\eta_u dB_u,$ where $\eta_u \in M_G^2(0,T)_{\C}.$ Since $M_t$ is a
symmetric martingale, we can define the quadratic variation in the
old fashion way: the limit point under norm of
$L_G^2(\Omega_T)_{\C},$ and then we would have
$$
\langle M \rangle_t=\int_0^t \eta_s^2 d\langle B \rangle_s.
$$

\begin{def1}
A complex martingale $M_t$ is conformal if $\langle M \rangle_t=0.$
\end{def1}

Obviously, a conformal G-Brownian motion is a conformal martingale.
Furthermore, we suppose $\eta_u \in M_G^4(0,T)_{\C}.$ If $B_t$ is
conformal, by $it\hat{o}'$s lemma and B-D-G inequity, we have
$M_t^2=2 \int_0^2 M_s \eta_s dB_s ,$ so $\langle M^2 \rangle_t=4
\int_0^t M_s^2 \eta_s^2 d\langle B \rangle_s.$ Notice $d\langle M
\rangle_s=\eta_s^2 d\langle B \rangle_s,$ and we get $\langle M^2
\rangle_t=4 \int_0^t M_s^2 d\langle M \rangle_s.$ In conclusion, we
get:

\begin{corollary}
$B_t$ is conformal and $M_t$ is defined as above. Then $M_t,M_t^2$
are conformal martingales.
\end{corollary}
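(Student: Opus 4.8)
The plan is to notice that both statements are really assertions about vanishing quadratic variation, and that the two identities needed have already been prepared in the paragraph preceding the corollary; the work is to assemble them cleanly and to check the integrability that makes each object well defined.

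First I would dispatch $M_t=\int_0^t\eta_u\,dB_u$ itself. Since $\eta\in M_G^4(0,T)_{\C}\subset M_G^2(0,T)_{\C}$ the integral is defined and, by the martingale property $\EC[Z+\int_r^T\eta_u\,dB_u\mid\Omega_s]=\EC[Z\mid\Omega_s]$ of the stochastic integral, $\EC[M_t\mid\Omega_s]=M_s$, so $M$ is a complex G-martingale. Being of the symmetric form $\int\eta\,dB$, its quadratic variation $\langle M\rangle$ is well defined as the $L_G^2(\Omega_t)_{\C}$-limit of $\sum_j(M_{t_{j+1}}-M_{t_j})^2$ and equals $\int_0^t\eta_s^2\,d\langle B\rangle_s$. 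Because $B$ is conformal, $\langle B\rangle_t\equiv 0$; hence $\int_0^t\eta_s^2\,d\langle B\rangle_s=0$ (the claim is trivial for simple integrands, where each increment $\langle B\rangle_{t_{j+1}}-\langle B\rangle_{t_j}$ is null, and passes to the limit via the $L^1$-estimate for the $d\langle B\rangle$-integral). So $\langle M\rangle_t=0$ and $M$ is a conformal martingale.

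Next, for $M^2$ I would apply the complex It\^o formula (Theorem \ref{Cito}) to $f(z)=z^2$ and $Z_t=M_t$, i.e., with $\alpha\equiv0$, vanishing $d\langle B\rangle$-coefficient, and $\beta_s=\eta_s$. Since $f$ is entire, $\bar\partial f=0$; also $\partial f(z)=2z$, $\partial\partial f\equiv 2$, and $\bar\partial\bar\partial f=\partial\bar\partial f=0$. Substituting into the explicit form of Theorem \ref{Cito} annihilates every term except $M_t^2=2\int_0^t M_s\eta_s\,dB_s+2\int_0^t\eta_s^2\,d\langle B\rangle_s$, and the last integral vanishes by conformality of $B$, leaving $M_t^2=2\int_0^t M_s\eta_s\,dB_s$. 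To license this I must know $M_s\eta_s\in M_G^2(0,T)_{\C}$: coordinatewise (using $M_G^p(0,T)_{\C}=\{\eta^{(1)}+i\eta^{(2)}\}$) a G-version of the Burkholder--Davis--Gundy inequality bounds $\E[\sup_{s\le T}|M_s|^4]$ by $C\,\E[\int_0^T|\eta_s|^4\,ds]<\infty$, whence $M\eta\in M_G^2$ by Cauchy--Schwarz. Then $M^2$ is itself a symmetric stochastic integral, hence a complex G-martingale (same martingale property), and its quadratic variation is $\langle M^2\rangle_t=4\int_0^t M_s^2\eta_s^2\,d\langle B\rangle_s=0$; thus $M^2$ is conformal.

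The main obstacle is not the differential calculus — the values of $\partial,\bar\partial$ on $z^2$ and the ensuing cancellations are immediate — but the integrability bookkeeping: verifying that $M_s\eta_s$ and $M_s^2\eta_s^2$ genuinely lie in $M_G^2(0,T)_{\C}$ and $M_G^1(0,T)_{\C}$ respectively, so that the stochastic and $d\langle B\rangle$ integrals, and the limiting constructions of $\langle M\rangle$ and $\langle M^2\rangle$, all make sense. This is precisely what forces the hypothesis $\eta\in M_G^4$ and what the B--D--G step is for; once these memberships are secured, the identity $\langle B\rangle\equiv0$ does the rest.
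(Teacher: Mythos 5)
Your proposal is correct and follows essentially the same route as the paper: the paper's argument is the paragraph preceding the corollary, which likewise writes $\langle M\rangle_t=\int_0^t\eta_s^2\,d\langle B\rangle_s=0$ by conformality of $B$, and obtains $M_t^2=2\int_0^t M_s\eta_s\,dB_s$ via the complex It\^o formula and B--D--G, whence $\langle M^2\rangle_t=4\int_0^t M_s^2\,d\langle M\rangle_s=0$. You simply make explicit the integrability bookkeeping ($M\eta\in M_G^2$, the role of $\eta\in M_G^4$) that the paper leaves implicit.
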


\begin{corollary}
If $M_t=\int_0^t \eta_s dB_s$ is conformal, with values in an open
set $E,$ and $f$ is a bounded analytic function with bounded first
order derivative on $E$, polynomial growth of the second order
derivative, then $f(M_t)$ is conformal. Furthermore,
$$
\langle f(M),\overline{f(M)} \rangle_t=\int_0^t f'(M_s)
\overline{f'(M_s)}d\langle M,\bar{M} \rangle_s
$$

\end{corollary}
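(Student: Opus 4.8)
The plan is to apply the complex It\^o formula (Theorem \ref{Cito}) to the function $f$ and the martingale $Z_t = M_t = \int_0^t \eta_s\,dB_s$. Since $M_t$ arises as a stochastic integral against the conformal G-Brownian Motion $B_t$, its differential has $\alpha_s \equiv 0$ and, because $\langle B\rangle_s \equiv 0$, also the $d\langle B\rangle_s$ term vanishes; so $dM_s = \eta_s\,dB_s$ with $\langle M\rangle_t = \int_0^t \eta_s^2\,d\langle B\rangle_s = 0$. Feeding this into the second (expanded) form of the It\^o formula, every term carrying a factor $d\langle B\rangle_s$ or its conjugate drops out, since $\langle B\rangle_s = 0$ forces $\langle B^{(1)}\rangle_s = \langle B^{(2)}\rangle_s$ and $\langle B^{(1)},B^{(2)}\rangle_s = 0$, hence $d\langle B\rangle_s = d\overline{\langle B\rangle}_s = 0$ and also $\partial\partial f\,\beta_s^2\,d\langle B\rangle_s = \bar\partial\bar\partial f\,\bar\beta_s^2\,d\langle\bar B\rangle_s = 0$. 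What survives is
\begin{equation*}
f(M_t) - f(M_0) = \int_0^t \partial f(M_s)\,\eta_s\,dB_s + \int_0^t \bar\partial f(M_s)\,\bar\eta_s\,d\bar B_s + 2\int_0^t \partial\bar\partial f(M_s)\,|\eta_s|^2\,d\langle B,\bar B\rangle_s.
\end{equation*}
Now invoke the analyticity of $f$: by the lemma characterizing analytic functions, $\bar\partial f = 0$, so the middle term vanishes, and since $\partial\bar\partial f = \bar\partial\partial f = \tfrac14\Delta f$ and $f$ analytic implies $\Delta f = 0$ (harmonicity of analytic functions, or directly $\partial\bar\partial f = \bar\partial(\partial f)$ with $\partial f$ again analytic so $\bar\partial(\partial f)=0$), the last term also vanishes. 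Using further that for an analytic $f$ one has $f'(z) = \partial f(z)$, we are left with
\begin{equation*}
f(M_t) - f(M_0) = \int_0^t f'(M_s)\,\eta_s\,dB_s = \int_0^t f'(M_s)\,dM_s.
\end{equation*}

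This representation immediately gives that $f(M_t)$ is a symmetric complex G-martingale (it is a stochastic integral against $B_t$, so by Proposition (iii) on conditional expectations of such integrals it has the martingale property), and more importantly it exhibits $f(M_t)$ as a stochastic integral $\int_0^t \zeta_s\,dB_s$ with integrand $\zeta_s = f'(M_s)\eta_s$. The boundedness hypotheses on $f$ and $f'$ over $E$, together with $\eta \in M_G^4(0,T)_{\C} \subset M_G^2(0,T)_{\C}$, ensure $\zeta \in M_G^2(0,T)_{\C}$ so the integral is well defined and the old-fashioned quadratic-variation construction applies to $f(M_t)$; hence
\begin{equation*}
\langle f(M)\rangle_t = \int_0^t \zeta_s^2\,d\langle B\rangle_s = \int_0^t f'(M_s)^2\eta_s^2\,d\langle B\rangle_s = 0
\end{equation*}
because $\langle B\rangle_t = 0$. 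This is exactly the statement that $f(M_t)$ is conformal. For the mutual-variation formula, I would apply the same It\^o computation (or its bilinear/polarization version) to $f(M_t)$ and $\overline{f(M_t)}$: writing $d f(M_s) = f'(M_s)\eta_s\,dB_s$ and $d\overline{f(M_s)} = \overline{f'(M_s)}\,\bar\eta_s\,d\bar B_s$, and using the relation $d\langle B,\bar B\rangle_s$ together with $d\langle M,\bar M\rangle_s = |\eta_s|^2\,d\langle B,\bar B\rangle_s$ — which follows from $\langle B^a, B^{\bar a}\rangle$-type bilinearity and the identity $\langle B,\bar B\rangle_t = \langle B^{(1)}\rangle_t + \langle B^{(2)}\rangle_t$ recorded earlier — one reads off
\begin{equation*}
\langle f(M),\overline{f(M)}\rangle_t = \int_0^t f'(M_s)\overline{f'(M_s)}\,d\langle M,\bar M\rangle_s.
\end{equation*}

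The main obstacle I anticipate is not any single algebraic identity but the bookkeeping needed to justify that all the terms in the expanded It\^o formula that are multiplied by $d\langle B\rangle_s$, $d\overline{\langle B\rangle}_s$, $d\langle B^{(1)}\rangle_s - d\langle B^{(2)}\rangle_s$, etc., genuinely vanish in $L_G^2(\Omega_t)_{\C}$ — i.e. that ``$\langle B\rangle_t = 0$'' can be propagated through integrands in $M_G^2(0,T)_{\C}$ via the integral-estimate lemma ($\E[|\int_0^T \eta_t\,d\langle B\rangle_t|] \le \bar\sigma^2\,\E[\int_0^T|\eta_t|\,dt]$ and its complex analogue), so that $\int_0^t (\cdots)\,d\langle B\rangle_s = 0$ q.s. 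The secondary subtlety is checking the integrability/regularity conditions under which Theorem \ref{Cito} applies to $f$ on the (possibly unbounded-in-time) path of $M$; here the hypothesis that $f$ is bounded with bounded first derivative and polynomially growing second derivative on the open set $E$ containing the range of $M$ is exactly what makes the stochastic and Lebesgue integrals in the It\^o expansion members of the right spaces, and I would spell out that $f(M_s) \in L_G^2$, $f'(M_s)\eta_s \in M_G^2(0,T)_{\C}$, and $\partial\bar\partial f(M_s)|\eta_s|^2 \in M_G^1(0,T)_{\C}$ before concluding.
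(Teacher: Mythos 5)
Your proposal is correct and follows essentially the same route as the paper: apply the complex It\^o formula, use analyticity ($\bar{\partial}f=0$, $\partial\bar{\partial}f=\tfrac14\Delta f=0$) to reduce to $f(M_t)=\int_0^t f'(M_s)\eta_s\,dB_s$, and then read off $\langle f(M)\rangle_t=\int_0^t (f'(M_s))^2\eta_s^2\,d\langle B\rangle_s=0$ and the mutual-variation identity from this stochastic-integral representation — you merely spell out the bookkeeping the paper compresses into ``by It\^o's lemma.'' The one small point of divergence is that you annihilate the $\partial\partial f\,\beta_s^2\,d\langle B\rangle_s$ term by invoking $\langle B\rangle\equiv 0$ (conformality of $B$, the section's standing assumption), whereas the stated hypothesis is conformality of $M$; the paper handles this by rewriting $\eta_s^2\,d\langle B\rangle_s$ as $d\langle M\rangle_s$, which is what the hypothesis actually provides.
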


\begin{proof}

By it$\hat{o}'s$ lemma, $f(M_t)=\int_0^t f'(M_s)\eta_s dB_s,$ so
\begin{eqnarray*}
\langle f(M) \rangle_t &=& \int_0^t(f'(M_s))^2 \eta_s^2 d\langle B
\rangle_s\\
&=& \int_0^t (f'(M_s))^2 d\langle M \rangle_s\\
&=&0
\end{eqnarray*}

The equation follows from the fact that $f(M)$ is also a symmetric
martingale.

\end{proof}


\renewcommand{\refname}{\large References}{\normalsize \ }

\end{document}